\def\supp{\mbox{supp}}
\def\Ric{\mbox{Ric}}
\def\p{\partial}
\def\R{\mathbb{R}}
\def\vv<#1>{\langle#1\rangle}
\def\XXint#1#2{\setbox0=\hbox{$#1{#2}{\int}$}{#2}\kern-.5\wd0 }
\def\XXint#1#2#3{{\setbox0=\hbox{$#1{#2#3}{\int}$}
     \vcenter{\hbox{$#2#3$}}\kern-.5\wd0}}
\def\vv<#1>{\langle#1\rangle}
\newtheorem{thm}{Theorem}[section]
\newtheorem{lem}{Lemma}[section]
\newtheorem{prop}{Proposition}[section]
\newtheorem{cor}{Corollary}[section]
\theoremstyle{definition}
\theoremstyle{remark}
\newtheorem{rem}{Remark}[section]
\numberwithin{equation}{section}
\begin{document}

\title{Li-Yau multiplier set and optimal Li-Yau gradient estimate on hyperbolic spaces}

\author{Chengjie Yu$^1$}
\address{Department of Mathematics, Shantou University, Shantou, Guangdong, 515063, China}
\email{cjyu@stu.edu.cn}
\author{Feifei Zhao}
\address{Department of Mathematics, Shantou University, Shantou, Guangdong, 515063, China}
\email{14ffzhao@stu.edu.cn}
\thanks{$^1$Research partially supported by an NSF project of China with contract no. 11571215.}

\renewcommand{\subjclassname}{%
  \textup{2010} Mathematics Subject Classification}
\subjclass[2010]{Primary 35K05; Secondary 53C44}
\date{}
\keywords{Heat equation, Li-Yau type gradient estimate, heat kernel}
\begin{abstract}
In this paper, motivated by finding sharp Li-Yau type gradient estimate for positive solution of heat equations on complete Riemannian manifolds with negative Ricci curvature lower bound, we first introduce the notion of Li-Yau multiplier set and show that it can be computed by heat kernel of the manifold. Then, an optimal Li-Yau type gradient estimate is obtained on hyperbolic spaces by using recurrence relations of heat kernels on hyperbolic spaces. Finally, as an application, we obtain sharp Harnack inequalities on hyperbolic spaces.
\end{abstract}
\maketitle\markboth{Yu \& Zhao}{Li-Yau type gradient estimate}
\section{Introduction}
The Li-Yau \cite{LY} gradient estimate:
\begin{equation}\label{eq-LY}
\|\nabla \log u\|^2-\alpha(\log u)_t\leq \frac{n\alpha^2}{2t}+\frac{n\alpha^2k}{2(\alpha-1)}
\end{equation}
for positive solution $u$ of the heat equation on complete Riemannian manifolds with $\Ric\geq -k$ and $k$ a nonnegative constant is of fundamental importance in geometric analysis. Here $\alpha$ is any constant greater than $1$.

On complete Riemannian manifolds with nonnegative Ricci curvature,  by letting $\alpha\to1^+$ in \eqref{eq-LY}, one has
\begin{equation}\label{eq-LY-0}
\|\nabla \log u\|^2-(\log u)_t\leq \frac{n}{2t}.
\end{equation}
This estimate is sharp where the equality can be achieved by the fundamental solution of  $\R^n$. However, \eqref{eq-LY} is not sharp when $k>0$. Finding sharp Li-Yau type gradient estimate for $k>0$ is still an unsolved problem. This is the motivation of this paper. We will assume that $k>0$ without further indications in the rest of this paper.

Li-Yau type gradient estimates are important since they give Harnack inequalities immediately by taking integration on geodesics. Many authors have obtained Li-Yau type gradient estimates in various forms or various settings. For example, in \cite{Ha}, Hamilton obtained a Li-Yau type gradient estimate in matrix form, and in \cite{Ha2}, Hamilton obtained a Li-Yau type gradient estimate in matrix form for Ricci flow. Hamilton's works were extended to the K\"ahler category by  Cao-Ni \cite{CN} and Cao \cite{Cao}, and further extended to $(p,p)$-forms on K\"ahler manifolds by Ni and Niu \cite{NN}. Recently, in \cite{ZZh}, the authors extended the Li-Yau type gradient estimate to metric measure spaces, and in \cite{ZZ0,ZZ,Ca,Ro}, the authors obtained Li-Yau type gradient estimates under integral curvature assumptions. Some other Li-Yau type gradient estimates can  be found in \cite{BQ,BB,BL,CFL,CH,Le,Pe,RY,Ya94,Ya95}. Here, we only mention some of them that are more related to the topic of this paper and compare them.

A slight improvement of \eqref{eq-LY}:
\begin{equation}\label{eq-LYD}
\|\nabla \log u\|^2-\alpha (\log u)_t\leq \frac{n\alpha^2}{2t}+\frac{n\alpha^2k}{4(\alpha-1)}
\end{equation}
was given by Davies \cite{Da}. We will call this the Li-Yau-Davies estimate in the rest of this paper.

In \cite{Ha}, Hamilton obtained
\begin{equation}\label{eq-Ha}
\|\nabla \log u\|^2-e^{2kt}(\log u)_t\leq e^{4kt}\frac{n}{2t}.
\end{equation}
This estimate is sharp in leading term as $t\to 0^+$ comparing to the Li-Yau-Davies estimate \eqref{eq-LYD}.

In \cite{BQ}, Bakry and Qian obtained
\begin{equation}\label{eq-BQ}
\|\nabla \log u\|^2-\left(1+\frac{2}{3}kt\right)(\log u)_t\leq\frac{n}{2t}+\frac{nk}{2}\left(1+\frac{1}{3}kt\right).
\end{equation}
This estimate is also sharp in leading term as $t\to 0^+$. This estimate was also obtained by Li and Xu \cite{LX} by a different method.

In \cite{LX},  Li and Xu obtained
\begin{equation}\label{eq-LX}
\|\nabla \log u\|^2-\left(1+\frac{\sinh(kt)\cosh(kt)-kt}{\sinh^{2}(kt)}\right)(\log u)_t\leq\frac{nk}{2}[\coth(kt)+1].
\end{equation}
It is not hard to see that \eqref{eq-LX} is also sharp in leading term as $t\to 0^+$. Moreover, as pointed out in \cite{YZ1},
 the asymptotic behavior of \eqref{eq-LX} as $t\to\infty$ is the same as \eqref{eq-LYD} with $\alpha=2$. The estimates \eqref{eq-BQ} and \eqref{eq-LX} are extended to a general form in \cite{Qi}. Li-Xu's estimate \eqref{eq-LX} was also obtained by Bakry et al. in \cite{BB} by a different method.

One should note that, in the Li-Yau-Davies estimate \eqref{eq-LYD}, one can choose different $\alpha>1$ for different given $t$. So, the time-dependence of $\alpha$ in the estimates \eqref{eq-Ha}, \eqref{eq-BQ} and \eqref{eq-LX} is not essentially a new feature in Li-Yau type gradient estimates for positive solutions of heat equations. A key feature is that the estimates \eqref{eq-Ha}, \eqref{eq-BQ} and \eqref{eq-LX} are all sharp in leading term as $t\to 0^+$.

For purpose of comparison, we rewrite a Li-Yau type gradient estimate in the following form:
\begin{equation}\label{eq-LY-1}
\beta\|\nabla \log u\|^2-(\log u)_t\leq \gamma.
\end{equation}
For example, for the Li-Yau-Davies estimate \eqref{eq-LYD},
\begin{equation}\label{eq-g-LYD}
\gamma_{LYD}(\beta,t)=\frac{n}{2t}\left(\frac 1\beta +\frac{kt}{2(1-\beta)}\right)
\end{equation}
and $\beta\in (0,1)$. For Hamilton's estimate \eqref{eq-Ha},
\begin{equation}
\beta=\beta_{H}:=e^{-2kt}
\end{equation}
and
\begin{equation}
\gamma=\gamma_{H}:=\frac{n}{2t}e^{2kt}.
\end{equation}
For Bakry-Qian's estimate \eqref{eq-BQ},
\begin{equation}
\beta=\beta_{BQ}:=\frac{1}{1+\frac{2}{3}kt}
\end{equation}
and
\begin{equation}
\gamma=\gamma_{BQ}:=\frac{n}{2t}\cdot\frac{1+kt+\frac13(kt)^2}{1+\frac23kt}.
\end{equation}
For Li-Xu's estimate \eqref{eq-LX},
\begin{equation}
\beta=\beta_{LX}:=\frac{1}{1+\frac{\sinh(kt)\cosh(kt)-kt}{\sinh^{2}(kt)}}
\end{equation}
and
\begin{equation}
\gamma=\gamma_{LX}:=\frac{n}{2t}\cdot\frac{kt[\coth(kt)+1]}{1+\frac{\sinh(kt)\cosh(kt)-kt}{\sinh^{2}(kt)}}.
\end{equation}

For a fixed time $t$, a Li-Yau type gradient estimate
\begin{equation}
\beta_1\|\nabla \log u\|^2-(\log u)_t\leq \gamma_1
\end{equation}
is better than
\begin{equation}
\beta_2\|\nabla \log u\|^2-(\log u)_t\leq \gamma_2
\end{equation}
if $\beta_1\geq \beta_2$ and $\gamma_1\leq \gamma_2$. For example, for a fixed time $t>0$,  $\gamma_{LYD}(\beta,t)$ achieves its minimum
\begin{equation}
\gamma_m(t)=\frac{n}{2t}\left(1+\sqrt\frac{kt}2\right)^2
\end{equation}
at
\begin{equation}
\beta_m(t)=\frac{1}{1+\sqrt\frac{kt}2}.
\end{equation}
Therefore, for each time $t>0$, the Li-Yau-Davies estimate \eqref{eq-LYD} for $\beta=\beta_m(t)$ is better than the Li-Yau-Davies estimate for $\beta<\beta_m(t)$.

In the Li-Yau-Davies estimate, let $\beta=\beta_{H}:=e^{-2kt}$. Then,
\begin{equation}
\gamma_{LYD}(\beta_H(t),t)=\frac{n}{2t}e^{2kt}+\frac{nk}{4(1-e^{-2kt})}>\gamma_{H}(t).
\end{equation}
Comparing this to Hamilton's estimate \eqref{eq-Ha}, it seems that Hamilton's estimate is better than the Li-Yau-Davies estimate \eqref{eq-LYD} for $\beta=\beta_H(t)$ for all time.
However, note that $\beta_H(t)<\beta_m(t)$ and
\begin{equation}
\gamma_m(t)<\gamma_{H}(t)
\end{equation}
when $t$ is large enough. So, when $t$ is large enough, the Li-Yau-Davies estimate \eqref{eq-LYD} is better than \eqref{eq-Ha}. More precisely, let $t_H>0$ be the intersection point of $\gamma_m(t)$ and $\gamma_H(t)$. Then, when $t\geq t_H$, since $\beta_m(t)\geq \beta_H(t)$ and $\gamma_{H}(t)\geq \gamma_m(t)$, Li-Yau-Davies estimate \eqref{eq-LYD} is better than Hamilton's estimate \eqref{eq-Ha}, and when $t< t_H$, since $\gamma_m(t)>\gamma_H(t)$, Hamilton's estimate \eqref{eq-Ha} is better than the Li-Yau-Davies estimate \eqref{eq-LYD} with $\beta\leq \beta_H(t)$. However, for $t<t_H$, Hamilton's estimate \eqref{eq-Ha} is not better than the Li-Yau-Davies estimate \eqref{eq-LYD} with $\beta\in (\beta_H(t),1)$ although it is sharp in leading term as $t\to 0^+$.

For Bakry-Qian's estimate \eqref{eq-BQ}, since $\gamma_{BQ}(t)<\gamma_m(t)$ for any $t>0$, Bakry-Qian's estimate \eqref{eq-BQ} is better than the Li-Yau-Davies estimate \eqref{eq-LYD} with $\beta\leq \beta_{BQ}(t)$ for any $t>0$. Moreover, since $\beta_{BQ}(t)>\beta_H(t)$ and $\gamma_{BQ}(t)<\gamma_{H}(t)$ for any $t>0$, Bakry-Qian's estimate \eqref{eq-BQ} is better than Hamilton's estimate \eqref{eq-Ha}.

For Li-Xu's estimate \eqref{eq-LX}, let $t_{LX}$ be the intersection point of $\gamma_{LX}(t)$ and $\gamma_m(t)$. Then, when $t\leq t_{LX}$, Li-Xu's estimate is better than the Li-Yau-Davies estimate for $\beta\leq \beta_{LX}(t)$ since $\gamma_m(t)\geq\gamma_{LX}(t)$ for $t\leq t_{LX}$. When $t> t_{LX}$, since $\beta_m(t)<\beta_{LX}(t)$ (see Proposition \ref{prop-LXLY} in the Appendix) and $$\gamma_{LYD}(\beta_{LX}(t),t)>\gamma_{LX}(t)>\gamma_m(t).$$
 Let $\beta_-(t)<\beta_m(t)<\beta_+(t)$ be such that
\begin{equation}
\gamma_{LYD}(\beta_+(t),t)=\gamma_{LYD}(\beta_-(t),t)=\gamma_{LX}(t).
\end{equation}
Then, Li-Xu's estimate \eqref{eq-LX} is better than the Li-Yau-Davies estimate \eqref{eq-LYD} for $\beta\in (0,\beta_-(t)]\cup[\beta_+(t),\beta_{LX}(t)]$ at each time $t>t_{LX}$.

For the comparison of Li-Xu's estimate \eqref{eq-LX} and Bakry-Qian's estimate \eqref{eq-BQ}, although one has $\beta_{BQ}(t)<\beta_{LX}(t)$ for any $t>0$, one can not conclude that Li-Xu's estimate \eqref{eq-LX} is better than Bakry-Qian's estimate \eqref{eq-BQ} when $t$ is large enough, since
\begin{equation}
\lim_{t\to\infty}\gamma_{BQ}(t)=\frac{nk}{4}<\frac{nk}{2}=\lim_{t\to \infty}\gamma_{LX}(t).
\end{equation}
Moreover, since $\beta_H(t)<\beta_{LX}(t)$ and $\gamma_{H}(t)>\gamma_{LX}(t)$ for any $t>0$ (see Proposition \ref{prop-LXHa} in the Appendix), Li-Xu's estimate \eqref{eq-LX} is better than Hamilton's estimate \eqref{eq-Ha}.

In summary, Hamilton's estimate \eqref{eq-Ha}, Bakry-Qian's estimate \eqref{eq-BQ} and Li-Xu's estimate \eqref{eq-LX} are better than the Li-Yau-Davies estimate \eqref{eq-LYD} only for certain range of $t$ and certain range of $\beta$. Bakry-Qian's \eqref{eq-BQ} and Li-Xu's \eqref{eq-LX} estimates are both better than Hamilton's estimate \eqref{eq-Ha}.

Motivated by the comparisons above, we introduce the following notion of Li-Yau multiplier set. Let $(M^n,g)$ be a complete Riemannian manifold and
\begin{equation}
\mathcal P(M,g)=\{u\in C^\infty(\R^+\times M)\ |\ u>0\ \mbox{and}\ u_t-\Delta_g u=0\}.
\end{equation}
For $x\in M$ and $u\in \mathcal P(M,g)$, define the Li-Yau multiplier set of $u$ at $x$ as
\begin{equation}
S(u,x)=\{(t,\beta,\gamma)\in \R^+\times[0,+\infty)\times\R\ |\  [\beta\|\nabla \log u\|^2-(\log u)_t](t,x)\leq \gamma\},
\end{equation}
and the Li-Yau multiplier set of $u$ at $x$ and time $t$ as
\begin{equation}
S_t(u,x)=\{(\beta,\gamma)\in [0,+\infty)\times\R\ |(t,\beta,\gamma)\in S(u,x) \}.
\end{equation}
Moreover, we define the Li-Yau multiplier set of $(M,g)$ at $x$ as
\begin{equation}
S(M,g,x)=\bigcap_{u\in \mathcal P(M,g)}S(u,x),
\end{equation}
the Li-Yau multiplier set of $(M,g)$ at $x$  and time $t$ as
\begin{equation}
S_t(M,g,x)=\bigcap_{u\in \mathcal P(M,g)}S_t(u,x),
\end{equation}
the Li-Yau multiplier set of $(M,g)$ as
\begin{equation}
S(M,g)=\bigcap_{x\in M}S(M,g,x),
\end{equation}
and the Li-Yau multiplier set of $(M,g)$ at time $t$ as
\begin{equation}
S_t(M,g)=\bigcap_{x\in M}S_t(M,g,x).
\end{equation}
A key observation of the Li-Yau multiplier set is as follows:
\begin{thm}\label{thm-LY-set}
Let $(M^n,g)$ be a complete Riemannian manifold with Ricci curvature bounded from below and $H(t,x,y)$ be its heat kernel. Then,
\begin{equation}
S(M,g,x)=\bigcap_{y\in M}S(H(\cdot,\cdot,y),x).
\end{equation}
In other words, $(t,\beta,\gamma)\in S(M,g,x)$ if and only if
\begin{equation}
\left[\beta\|\nabla_x \log H \|^2-(\log H)_t\right](t,x,y)\leq \gamma
\end{equation}
for any $y\in M$. As a consequence,
\begin{equation}
S(M,g)=\bigcap_{x,y\in M}S(H(\cdot,\cdot,y),x).
\end{equation}
In other words, $(t,\beta,\gamma)\in S(M,g)$ if and only if
\begin{equation}
\left[\beta\|\nabla_x \log H \|^2-(\log H)_t\right](t,x,y)\leq \gamma
\end{equation}
for any $x,y\in M$.
\end{thm}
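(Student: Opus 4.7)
The inclusion $S(M,g,x)\subseteq\bigcap_{y\in M}S(H(\cdot,\cdot,y),x)$ is tautological: for each fixed $y\in M$, the map $(t,x)\mapsto H(t,x,y)$ is a positive smooth solution of the heat equation, hence lies in $\mathcal{P}(M,g)$, so every triple in $S(M,g,x)=\bigcap_{u\in\mathcal{P}(M,g)}S(u,x)$ automatically lies in each $S(H(\cdot,\cdot,y),x)$. The content of the theorem is the reverse inclusion.

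For that direction, the plan is to represent every positive solution as a positive superposition of heat kernels, and then derive the bound for $u$ from the pointwise bound for each $H(\cdot,\cdot,y)$ via Jensen's inequality applied to the convex function $v\mapsto\|v\|^2$. Fix $u\in\mathcal{P}(M,g)$, $x\in M$, and $(t,\beta,\gamma)\in\bigcap_{y\in M}S(H(\cdot,\cdot,y),x)$. For each $t_0>0$, the semigroup identity
\[
u(t+t_0,x)=\int_M H(t,x,y)\,u(t_0,y)\,dy
\]
holds by parabolic uniqueness for positive heat solutions under the Ricci lower bound. Introduce the probability measure $d\nu(y):=H(t,x,y)\,u(t_0,y)\,dy/u(t+t_0,x)$ on $M$ and differentiate under the integral (justified by Gaussian-type heat kernel and gradient estimates under the Ricci hypothesis) to obtain
\[
(\log u)_t(t+t_0,x)=\int_M (\log H)_t(t,x,y)\,d\nu(y),\qquad \nabla_x\log u(t+t_0,x)=\int_M \nabla_x\log H(t,x,y)\,d\nu(y).
\]

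Jensen's inequality then gives $\|\nabla\log u\|^2(t+t_0,x)\leq\int_M\|\nabla_x\log H\|^2(t,x,y)\,d\nu(y)$, so
\[
[\beta\|\nabla\log u\|^2-(\log u)_t](t+t_0,x)\leq \int_M[\beta\|\nabla_x\log H\|^2-(\log H)_t](t,x,y)\,d\nu(y)\leq\gamma,
\]
where the last inequality uses the hypothesis together with $\nu(M)=1$. Letting $t_0\to 0^+$ and invoking continuity in time of the left-hand side (from smoothness of $u$ on $\R^+\times M$) yields $[\beta\|\nabla\log u\|^2-(\log u)_t](t,x)\leq\gamma$, so $(t,\beta,\gamma)\in S(u,x)$. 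Intersecting over $u\in\mathcal{P}(M,g)$ gives the reverse inclusion; the second identity for $S(M,g)$ follows immediately from the definition $S(M,g)=\bigcap_{x\in M}S(M,g,x)$.

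The main obstacle is securing the integral representation displayed above, since $u(t_0,\cdot)$ may a priori have arbitrary growth at infinity. Under the Ricci lower bound this follows from Widder--Aronson-type uniqueness and representation results for positive solutions of the heat equation on complete manifolds. The deliberate choice to start the semigroup from the positive time $t_0$ (rather than from a distributional initial trace at $t=0$) and then evaluate at $t+t_0$ keeps us in the smooth positive category at every step, so only the smoothness of $u$ on $\R^+\times M$ is needed for the final passage $t_0\to 0^+$; this is where the hypothesis at the single time $t$ is transferred to $u$ at that same time $t$, without any additional continuity or monotonicity assumption on $\bigcap_{y\in M}S(H(\cdot,\cdot,y),x)$ in the time variable.
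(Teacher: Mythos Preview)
Your argument is correct and rests on the same convexity idea as the paper: writing $u$ as a positive superposition of heat kernels and using $\|\int v\,d\nu\|^2\le\int\|v\|^2\,d\nu$ (your Jensen step is exactly the paper's symmetrized Cauchy--Schwarz $2\langle\nabla_x\log H(y),\nabla_x\log H(z)\rangle\le\|\nabla_x\log H(y)\|^2+\|\nabla_x\log H(z)\|^2$ rewritten with the probability measure $d\nu$). The difference is in how the reduction is packaged. The paper first time-shifts to make $u$ smooth up to $t=0$ (Proposition~\ref{prop-ele-p-S}(6)), then cuts off the initial data to compact support and approximates (Proposition~\ref{prop-ele-p-S}(10)); with $f$ compactly supported, the representation $u(t,x)=\int_M H(t,x,y)f(y)\,dy$ and all the differentiations under the integral are trivially justified, and one passes to the limit in the cutoff at the end. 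You instead keep the full positive solution, invoke the semigroup identity $u(t+t_0,x)=\int_M H(t,x,y)u(t_0,y)\,dy$ directly, and send $t_0\to 0^+$ at the end. Your route is slightly slicker conceptually, but it front-loads two nontrivial analytic facts that you only sketch: the representation formula for an arbitrary positive solution (with a priori uncontrolled growth of $u(t_0,\cdot)$) and the legitimacy of differentiating under the integral against this unbounded weight. Both are true under the Ricci lower bound, but the paper's compact-support reduction sidesteps them entirely and is more self-contained; if you keep your version, you should at least cite the positive-solution uniqueness/representation theorem (e.g.\ Donnelly or Li--Yau) explicitly and spell out why the gradient heat-kernel bounds dominate the tail of $u(t_0,\cdot)$.
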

\noindent This observation tells us that to check if a triple $(t,\beta,\gamma)$ belongs to $S(M,g)$, one only need to check if
\begin{equation}
\beta\|\nabla \log u\|^2-(\log u)_t\leq \gamma
\end{equation}
is true for the heat kernel. This highly simplifies the computation of $S(M,g)$.

By using the expression
\begin{equation}
H(t,x,y)=(4\pi t)^{-\frac n2}e^{-\frac{\|x-y\|^2}{4t}}
\end{equation}
of heat kernel on the Euclidean space $\R^n$ with standard metric $g_{E}$, it is not hard to see that
\begin{equation}\label{eq-LY-E}
S(\R^n,g_{E})=\left\{(t,\beta,\gamma)\ |\ t>0,\beta \in [0,1],\mbox{ and } \gamma\geq \frac{n}{2t}\right\}.
\end{equation}
The Li-Yau estimate \eqref{eq-LY-0} is equivalent to that
\begin{equation}
S(M^n,g)\supset S(\R^n,g_{E})
\end{equation}
for any complete Riemannian manifold $(M^n,g)$ with nonnegative Ricci curvature. Motivated by this, one may reformulate the problem of finding sharp Li-Yau type gradient estimate as follows: Let $M_{\kappa}^n$ be the space form of dimension $n$ with constant sectional curvature $\kappa$ and with standard metric $g_{\kappa}$. Do we have
\begin{equation}
S(M^n,g)\supset S(M_\kappa^n,g_\kappa)
\end{equation}
for any complete Riemannian manifold $(M^n,g)$ with Ricci curvature not less than $(n-1)\kappa$? Laying aside the problem, one still has another problem of finding $S(M_\kappa^n,g_\kappa)$ for $\kappa\neq 0$. Motivated by the work \cite{DM} of Davies and Mandouvalos,  we will discuss this problem with $\kappa=-1$ in this paper.

More precisely, let $K_n(t,r(x,y))$ be the heat kernel of the $n$-dimensional hyperbolic space, then by using the recurrence relation:
\begin{equation}\label{eq-re-s2}
K_{n+2}=-\frac{e^{-nt}}{2\pi\sinh r}\p_r K_n,
\end{equation}
we are able to show the following Li-Yau type gradient estimate on hyperbolic space of odd dimension.
\begin{thm}\label{thm-odd} Let $(M^n,g)$ be an odd dimensional complete Riemannian manifold with constant sectional curvature $-1$. Then $$\left(t, \beta,\frac{n}{2t}+\frac{(n-1)^2}{4(1-\beta)}\right)\in S(M,g),$$
for any $t>0$ and $\beta\in [0,1)$. In other words,
\begin{equation}
\beta\|\nabla \log u\|^2-(\log u)_t\leq \frac{n}{2t}+\frac{(n-1)^2}{4(1-\beta)}
\end{equation}
for any $\beta\in [0,1)$ and any positive solution $u$ of the heat equation on $(M,g)$.
\end{thm}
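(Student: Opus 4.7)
By Theorem \ref{thm-LY-set}, it suffices to verify the estimate for the heat kernel $K_n(t,r)$ of the $n$-dimensional hyperbolic space; writing $\psi_n := -\partial_r \log K_n$ (which is nonnegative since the heat kernel is decreasing in $r$), the goal is
\begin{equation}\label{eq-propgoal}
\beta \psi_n^2 - \partial_t \log K_n \leq \frac{n}{2t} + \frac{(n-1)^2}{4(1-\beta)}
\end{equation}
for every $t > 0$, $r \geq 0$, and $\beta \in [0,1)$. My plan is to exploit the recurrence \eqref{eq-re-s2}. Setting $L_n := e^{((n-1)/2)^2 t}K_n$ removes the exponential factor in \eqref{eq-re-s2}, so that $L_{n+2} = -(2\pi \sinh r)^{-1}\partial_r L_n$ with base case $L_1 = (4\pi t)^{-1/2}e^{-r^2/(4t)}$. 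Factoring $L_{2m+1} = L_1\cdot\phi_m$ (with $m = (n-1)/2$) then converts this into the scalar recurrence
\begin{equation}\label{eq-propphi}
\phi_{m+1} = \frac{\phi_m}{2\pi \sinh r}\left(\frac{r}{2t} + f_m\right), \qquad f_m := -\partial_r \log \phi_m, \qquad \phi_0 \equiv 1,\ f_0 \equiv 0.
\end{equation}

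Since $\psi_n = r/(2t) + f_m$ and $\partial_t \log K_n = -1/(2t) + r^2/(4t^2) + \partial_t \log \phi_m - m^2$, substituting into \eqref{eq-propgoal} and completing the square in $r$ yields the algebraic identity
\begin{equation*}
\beta \psi_n^2 - \partial_t \log K_n - \frac{n}{2t} - \frac{(n-1)^2}{4(1-\beta)} = -\frac{1-\beta}{4t^2}\left(r - \frac{2t\beta f_m}{1-\beta}\right)^2 + \frac{\beta(f_m^2 - m^2)}{1-\beta} - \left(\frac{m}{t} + \partial_t \log \phi_m\right).
\end{equation*}
The first term on the right is manifestly non-positive, so it is sufficient to establish the pointwise bounds
\begin{equation}\label{eq-propab}
\textup{(a)}\ |f_m(r,t)| \leq m, \qquad \textup{(b)}\ \partial_t \log \phi_m(r,t) \geq -\frac{m}{t},
\end{equation}
for all $r > 0$ and $t > 0$.

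The heart of the proof is thus to verify \textup{(a)} and \textup{(b)} for every $m \geq 0$ by induction using \eqref{eq-propphi}. The case $m = 0$ is trivial, and $m = 1$ follows from $\phi_1 = (4\pi t)^{-1}\,r/\sinh r$, which gives $f_1 = \coth r - 1/r \in [0,1)$ (monotone in $r$ since $\sinh r > r$) and $\partial_t \log \phi_1 = -1/t$, saturating \textup{(b)}. The main obstacle is the inductive step: differentiating \eqref{eq-propphi} expresses $f_{m+1}$ and $\partial_t \log \phi_{m+1}$ in terms of $f_m$ together with $\partial_r f_m$ and $\partial_t f_m$, for which a priori bounds are not obvious. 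A natural workaround is to work with $h_m := \psi_{2m+1} = r/(2t) + f_m$, which satisfies the cleaner recurrence
\[
h_{m+1} = h_m + \coth r - \partial_r \log h_m,
\]
and to propagate through the induction a stronger structural hypothesis on $h_m$---for example, the sandwich $r/(2t) \leq h_m \leq r/(2t) + m$ combined with $r$-monotonicity---that is preserved by this recurrence and directly implies both \textup{(a)} and \textup{(b)}.
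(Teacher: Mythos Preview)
Your reduction to the two pointwise bounds (a) and (b) is correct and coincides with the paper's argument: in the paper's notation $\alpha_{2m+1}$ (defined by $K_n=(4\pi t)^{-n/2}e^{-(n-1)^2t/4-r^2/(4t)}\alpha_n$), your $\phi_m$ is $(4\pi t)^{-m}\alpha_{2m+1}$, so (a) and (b) are precisely $0\le -(\log\alpha_{2m+1})_r\le m$ and $(\log\alpha_{2m+1})_t\ge 0$, which are parts (2) and (1) of Proposition~\ref{prop-alpha}. The completing-the-square step is the same as the paper's. (One minor omission: for a general $M$ of constant curvature $-1$ you must first pass to $\mathbb H^n$ via the covering map, as in Proposition~\ref{prop-ele-p-S}(5), before invoking Theorem~\ref{thm-LY-set}.)

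The genuine gap is the inductive step for (a) and (b). Your proposed hypothesis---the sandwich $r/(2t)\le h_m\le r/(2t)+m$ together with $r$-monotonicity---neither closes the induction nor yields (b). For the upper sandwich bound: passing to $h_{m+1}=h_m+\coth r-\partial_r\log h_m$ requires $\partial_r\log h_m\ge\coth r-1$, which is strictly stronger than $\partial_r h_m\ge 0$; and propagating $r$-monotonicity to $h_{m+1}$ brings in $\partial_r^2 h_m$, so each step of the recurrence demands one more $r$-derivative than the hypothesis supplies. For (b): since $\partial_t\log\phi_m=\sum_{j<m}\partial_t\log h_j$ and your $f_j$ genuinely depends on $t$ once $j\ge 2$, you need control of $\partial_t f_j$, which a purely spatial hypothesis on $h_j$ cannot provide. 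The paper avoids both difficulties by iterating the recurrence all the way to the closed form $\alpha_{2m+1}=\sum_{i=0}^{m-1}t^iP_{m,i}(f_1,\dots,f_m)$ (here the $f_k$'s are the iterated $\sigma$-derivatives of $r/\sinh r$, unrelated to your $f_m$): since this is a polynomial in $t$ with nonnegative coefficients, (b) is immediate; and (a) follows from the weighted homogeneity of the $P_{m,i}$ (Proposition~\ref{prop-f}(6)) together with the key bound $0\le -(\log f_k)_r\le k$ (Proposition~\ref{prop-f}(3)). That analysis of the $f_k$'s is the substantive work you are missing.
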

\noindent This estimate is sharp in leading term as $t\to 0^+$ and sharp as $t\to \infty$ for hyperbolic spaces.

For hyperbolic space of even dimension, by using the recurrence relation:
\begin{equation}\label{eq-re-s1}
K_n(t,r)=\sqrt 2e^{\frac{(2n-1)t}{4}}\int_r^\infty \frac{K_{n+1}(t,\rho)\sinh\rho}{(\cosh\rho-\cosh r)^\frac12}d\rho.
\end{equation}
 we are only able to obtain a weaker conclusion:
\begin{thm}\label{thm-even} Let $(M^n,g)$ be an even dimensional complete Riemannian manifold with constant sectional curvature $-1$. Then $$\left(t, \beta,\frac{n+1}{2t}+\frac{(n-1)^2}{4(1-\beta)}\right)\in S(M,g),$$
for any $t>0$ and $\beta\in [0,1)$.
In other words,
\begin{equation}
\beta\|\nabla \log u\|^2-(\log u)_t\leq \frac{n+1}{2t}+\frac{(n-1)^2}{4(1-\beta)}
\end{equation}
for any $\beta\in [0,1)$ and any positive solution $u$ of the heat equation on $(M,g)$.
\end{thm}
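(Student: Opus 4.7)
My plan is to apply Theorem~\ref{thm-LY-set} to reduce the claim to a pointwise gradient estimate for the radial heat kernel $K_n(t,r)$ of $H^n$, and then use the integral recurrence \eqref{eq-re-s1} to transfer the odd-dimensional estimate of Theorem~\ref{thm-odd} (applied to $K_{n+1}$) to the even-dimensional kernel $K_n$. Writing $K_n(t,r) = A(t)\,I(t,r)$ with $A(t) = \sqrt{2}\,e^{(2n-1)t/4}$ and
\[
I(t,r) = \int_r^{\infty}\frac{K_{n+1}(t,\rho)\sinh\rho}{\sqrt{\cosh\rho - \cosh r}}\,d\rho,
\]
we have $(\log K_n)_r = (\log I)_r$ and $(\log K_n)_t = (2n-1)/4 + (\log I)_t$, so the target inequality reduces to bounding $\beta(I_r/I)^2 - I_t/I$ with the additive correction $(2n-1)/4$ accounting for the prefactor $A(t)$.

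Under the substitution $y = \cosh\rho - \cosh r$ (so $\partial\rho/\partial r = \sinh r/\sinh\rho$), one obtains $I = \int_0^{\infty} K_{n+1}\,y^{-1/2}\,dy$, $I_t = \int_0^{\infty} (K_{n+1})_t\,y^{-1/2}\,dy$, and $I_r = \sinh r\int_0^{\infty} (K_{n+1})_\rho\,\sinh^{-1}\!\rho\,y^{-1/2}\,dy$. A Cauchy--Schwarz inequality in the measure $y^{-1/2}\,dy$, together with the trivial bound $\sinh r\le\sinh\rho$ on the integration domain, gives
\[
I_r^2 \le I\,\int_0^{\infty}\frac{(K_{n+1})_\rho^2}{K_{n+1}}\,\frac{dy}{\sqrt{y}}.
\]
Theorem~\ref{thm-odd} applied to $K_{n+1}$ (odd dimension $n+1$) supplies the pointwise inequality $(K_{n+1})_\rho^2/K_{n+1} \le \beta^{-1}[\gamma_{n+1}K_{n+1} + (K_{n+1})_t]$ with $\gamma_{n+1} = (n+1)/(2t) + n^2/(4(1-\beta))$. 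Integrating this estimate against $y^{-1/2}\,dy$ and combining with the Cauchy--Schwarz bound produces $\beta(I_r/I)^2 \le \gamma_{n+1} + I_t/I$, which upon absorbing the shift $(2n-1)/4$ yields a Li--Yau-type inequality for $K_n$.

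The main obstacle is that this naive combination gives the bound $(n+1)/(2t) + n^2/(4(1-\beta)) - (2n-1)/4$, which exceeds the claimed $(n+1)/(2t) + (n-1)^2/(4(1-\beta))$ by $(2n-1)\beta/(4(1-\beta))$ and matches it only at $\beta = 0$. Recovering the sharper constant requires refining the Cauchy--Schwarz step: since $\sinh r \le \sinh\rho$ is tight only on the diagonal $\rho = r$, a more careful weighted version---or a direct exploitation of the concentration of $K_{n+1}$ near $\rho = r$---should close this gap. Even after such a refinement, the leading $1/t$ coefficient remains $(n+1)/2$ rather than the sharp odd-dimensional value $n/2$, which accounts for the authors' remark that only a \emph{weaker conclusion} is obtainable in even dimensions via the integral recurrence \eqref{eq-re-s1}.
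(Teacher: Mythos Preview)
Your proposal has a genuine gap that is not merely a matter of ``refining Cauchy--Schwarz.'' The combination of Cauchy--Schwarz with the full Li--Yau inequality for $K_{n+1}$ inevitably imports the constant $n^2/(4(1-\beta))$ from the $(n+1)$-dimensional estimate, and the additive shift $(2n-1)/4$ coming from the prefactor $A(t)$ is $\beta$-independent, so it cannot cancel the $\beta$-dependent excess $(2n-1)\beta/(4(1-\beta))$. No tightening of the inequality $\sinh r\le\sinh\rho$ or of the Cauchy--Schwarz weight will repair this: the loss is structural, because you are treating the odd-dimensional estimate as a black box rather than exploiting the explicit form of $K_{n+1}$.

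The paper proceeds quite differently. It factors $K_n=(4\pi t)^{-n/2}e^{-(n-1)^2t/4-r^2/(4t)}\alpha_n$ and establishes two pointwise facts about $\alpha_{2m}$: first, $\bigl[\log\bigl((4\pi t)^{1/2}\alpha_{2m}\bigr)\bigr]_t\ge0$ (Proposition~\ref{prop-alpha}(1) fed through the integral \eqref{eq-alpha-even}); second, and crucially, $0\le-(\log\alpha_{2m})_r\le m-\tfrac12=\tfrac{n-1}{2}$ (Proposition~\ref{prop-alpha}(3)). The latter is obtained not via Cauchy--Schwarz but by rewriting $\alpha_{2m}$ through \eqref{eq-alpha-even}, using the refined bound $-(\log(\alpha_{2m+1}/f_1))_s\le m-1$ (which in turn rests on the monotonicity of the ratios $q_k=f_{k+1}/f_k$ from Proposition~\ref{prop-f}(4)), and then applying Lemma~\ref{lem-Z} to control the kernel $(\cosh s-\cosh r)^{-1/2}$, which contributes exactly the extra $\tfrac12$. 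With $-(\log\alpha_n)_r\le(n-1)/2$ in hand, completing the square in $\beta(-r/(2t)+(\log\alpha_n)_r)^2-r^2/(4t^2)$ gives precisely $(n-1)^2/(4(1-\beta))$, not $n^2/(4(1-\beta))$. The $(n+1)/(2t)$ term arises because the time-derivative bound is for $(4\pi t)^{1/2}\alpha_n$ rather than $\alpha_n$ itself. So the route to the sharp $\beta$-dependent constant goes through the fine structure of $\alpha_n$ and the $f_m$'s, not through a transfer of the assembled odd-dimensional inequality.
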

\noindent This estimate is better than the Li-Yau-Davies estimate for large $t$, although it is not sharp in leading term as $t\to 0^+$. In fact, for the hyperbolic plane $\mathbb H^2$ (see Proposition \ref{prop-dim-2} in the Appendix),
\begin{equation}\label{eq-dim-2}
-(\log K_2)_t(t,0)>\frac{1}{t}+\frac{1}{4}.
\end{equation}
So, we can not expect the same conclusion as in Theorem \ref{thm-odd} holds on the hyperbolic plane.

It is not hard to see that $S_{t_1}(M,g)\subset S_{t_2}(M,g)$ for any $0<t_1<t_2$ (see (6) of Proposition \ref{prop-ele-p-S}). So, we define the Li-Yau multiplier set at time infinity as
\begin{equation}
S_\infty(M,g)=\cup_{t>0}S_t(M,g).
\end{equation}
By \eqref{eq-LY-E}, it is clear that
\begin{equation}
S_\infty(\R^n,g_E)=[0,1]\times \R_+.
\end{equation}
A direct corollary of Theorem \ref{thm-odd} and Theorem \ref{thm-even} is as follows.
\begin{cor}
Let $(M^n,g)$ be a complete Riemannian manifold with constant sectional curvature $-1$. Then, $$S_\infty(M,g)\supset\left\{(\beta,\gamma)\ \Bigg|\beta\in [0,1)\ {\rm and}\ \gamma>\frac{(n-1)^2}{4(1-\beta)}\right\}.$$
\end{cor}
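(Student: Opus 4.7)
The plan is to reduce the corollary to a direct consequence of Theorem \ref{thm-odd} and Theorem \ref{thm-even} by choosing the time parameter $t$ large enough so that the $1/t$-term in either of those estimates becomes negligible relative to the slack in $\gamma$.

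First I would fix an arbitrary pair $(\beta,\gamma)$ with $\beta\in[0,1)$ and $\gamma>\tfrac{(n-1)^2}{4(1-\beta)}$, and set $\epsilon:=\gamma-\tfrac{(n-1)^2}{4(1-\beta)}>0$. Depending on the parity of $n$, Theorem \ref{thm-odd} (with constant $c_n=n/2$) or Theorem \ref{thm-even} (with $c_n=(n+1)/2$) guarantees that for every $t>0$,
\begin{equation}
\left(t,\beta,\frac{c_n}{t}+\frac{(n-1)^2}{4(1-\beta)}\right)\in S(M,g).
\end{equation}

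Next I would choose $t_0:=c_n/\epsilon$, so that $\tfrac{c_n}{t_0}\le \epsilon$, and observe that therefore
\begin{equation}
\frac{c_n}{t_0}+\frac{(n-1)^2}{4(1-\beta)}\le \gamma.
\end{equation}
Using the elementary monotonicity property that $(t,\beta,\gamma')\in S(M,g)$ and $\gamma'\le\gamma$ imply $(t,\beta,\gamma)\in S(M,g)$ (which is immediate from the definition of the Li-Yau multiplier set), we deduce $(t_0,\beta,\gamma)\in S(M,g)$, i.e.\ $(\beta,\gamma)\in S_{t_0}(M,g)$. By the definition
\begin{equation}
S_\infty(M,g)=\bigcup_{t>0}S_t(M,g),
\end{equation}
this already gives $(\beta,\gamma)\in S_\infty(M,g)$, completing the inclusion.

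There is no real obstacle here; the corollary is essentially a repackaging of Theorem \ref{thm-odd} and Theorem \ref{thm-even} in the limit $t\to\infty$. The only tiny verification needed is the trivial monotonicity of $S(M,g)$ in the $\gamma$-slot, which follows directly from its definition as the set of triples satisfying an inequality of the form $\beta\|\nabla\log u\|^2-(\log u)_t\le\gamma$.
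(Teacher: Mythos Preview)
Your proof is correct and is exactly the intended argument: the paper itself presents this corollary as an immediate consequence of Theorem~\ref{thm-odd} and Theorem~\ref{thm-even} without writing out any further details. The only step you make explicit---monotonicity of $S(M,g)$ in the $\gamma$-variable---is precisely (3) of Proposition~\ref{prop-ele-p-S}, so nothing additional is needed.
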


Finally, by a standard argument as in \cite{LY}, we have the following sharp  Harnack inequality.
\begin{thm}\label{thm-harnack}
\begin{enumerate}
\item Let $(M^n,g)$ be an odd dimensional complete Riemannian manifold with constant sectional curvature $-1$ and  $u$ be a positive solution to the heat equation on $M$. Then, for any $x_1,x_2\in M$ and $0<t_1<t_2$,
    \begin{equation*}
    u(x_1,t_1)\leq \left(\frac{t_2}{t_1}\right)^\frac n2 \exp\left(\frac{r^2(x_1,x_2)}{4(t_2-t_1)}+\frac{(n-1)^2}4(t_2-t_1)+\frac{n-1}{2}r(x_1,x_2)\right)u(x_2,t_2).
    \end{equation*}
\item Let $(M^n,g)$ be an even dimensional complete Riemannian manifold with constant sectional curvature $-1$ and $u$ be a positive solution to the heat equation on $M$. Then, for any $x_1,x_2\in M$ and $0<t_1<t_2$,
    \begin{equation*}
    u(x_1,t_1)\leq \left(\frac{t_2}{t_1}\right)^\frac {n+1}2 \exp\left(\frac{r^2(x_1,x_2)}{4(t_2-t_1)}+\frac{(n-1)^2}4(t_2-t_1)+\frac{n-1}{2}r(x_1,x_2)\right)u(x_2,t_2).
    \end{equation*}
\end{enumerate}
Here $r(x_1,x_2)$ means the distance between $x_1$ and $x_2$.
\end{thm}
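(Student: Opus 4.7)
The plan is to follow the classical Li-Yau recipe: integrate the gradient estimate of Theorem~\ref{thm-odd} (respectively Theorem~\ref{thm-even}) along a space-time curve joining $(x_1,t_1)$ to $(x_2,t_2)$, then optimize the free parameter $\beta\in(0,1)$.

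Concretely, write $r=r(x_1,x_2)$, let $\eta:[0,1]\to M$ be the constant-speed minimizing geodesic with $\eta(0)=x_1$ and $\eta(1)=x_2$ (so $\|\eta'(s)\|\equiv r$), and set $\tau(s)=(1-s)t_1+st_2$. Differentiating the composite $s\mapsto \log u(\eta(s),\tau(s))$ gives $\nabla\log u\cdot\eta'+(\log u)_t(t_2-t_1)$. Lower-bounding $(\log u)_t$ by $\beta\|\nabla\log u\|^2-\gamma(\tau,\beta)$ via Theorem~\ref{thm-odd}, where
\begin{equation*}
\gamma(\tau,\beta)=\frac{n}{2\tau}+\frac{(n-1)^2}{4(1-\beta)}
\end{equation*}
(replace $n$ by $n+1$ in the first term for the even case), and then using Cauchy--Schwarz on the cross term together with completing the square in $\|\nabla\log u\|$, the gradient contribution is absorbed and one is left with the pointwise lower bound
\begin{equation*}
\frac{d}{ds}\log u(\eta(s),\tau(s))\ge -\frac{r^2}{4\beta(t_2-t_1)}-(t_2-t_1)\,\gamma(\tau(s),\beta).
\end{equation*}

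Integrating in $s$ from $0$ to $1$ and using $\int_0^1 ds/\tau(s)=\log(t_2/t_1)/(t_2-t_1)$ yields the weighted upper bound
\begin{equation*}
\log\frac{u(x_1,t_1)}{u(x_2,t_2)}\le \frac{r^2}{4\beta(t_2-t_1)}+\frac{(n-1)^2(t_2-t_1)}{4(1-\beta)}+\frac{n}{2}\log\frac{t_2}{t_1}
\end{equation*}
in the odd case, with $\frac{n}{2}$ replaced by $\frac{n+1}{2}$ in the even case. The final step is to minimize the right-hand side in $\beta\in(0,1)$. The elementary identity $\min_{\beta\in(0,1)}\bigl(A/\beta+B/(1-\beta)\bigr)=(\sqrt{A}+\sqrt{B})^2$, attained at $\beta=\sqrt{A}/(\sqrt{A}+\sqrt{B})$, applied with $A=r^2/[4(t_2-t_1)]$ and $B=(n-1)^2(t_2-t_1)/4$ produces exactly the combination $\frac{r^2}{4(t_2-t_1)}+\frac{(n-1)r}{2}+\frac{(n-1)^2(t_2-t_1)}{4}$, and exponentiating gives the claimed inequality.

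The argument presents no substantive obstacle; the key structural feature is that the $\beta$-penalty $(n-1)^2/[4(1-\beta)]$ in the gradient estimate is time-independent, so the optimization in $\beta$ decouples cleanly from the integration over $\tau$ and delivers the characteristic mixed term $\frac{(n-1)r}{2}$ that reflects the hyperbolic geometry.
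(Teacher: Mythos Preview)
Your argument is correct and follows essentially the same route as the paper: the paper invokes Proposition~\ref{prop-harnack-g} (which packages the very integration-along-a-geodesic computation you spell out) together with Theorem~\ref{thm-odd}/\ref{thm-even} to obtain the same intermediate bound with free parameter $\beta$, and then optimizes with the identical choice $\beta=\frac{1}{1+(n-1)(t_2-t_1)/r}$, which is exactly your $\sqrt{A}/(\sqrt{A}+\sqrt{B})$.
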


The recurrence relations \eqref{eq-re-s2} and \eqref{eq-re-s1} were proved in \cite{DM} by using Selberg's transform. For a simple direct proof and some similar recurrence relations on spheres, see \cite{YZ2}.

The rest of this paper is organized as follows. In Section 2, we introduce some elementary properties of Li-Yau multiplier sets and prove Theorem \ref{thm-LY-set}. In section 3, we prove Theorem \ref{thm-odd}, Theorem \ref{thm-even} and Theorem \ref{thm-harnack}. In Section 4, the Appendix, we give the calculations for comparison of Li-Xu's estimate \eqref{eq-LX} with Hamilton's estimate \eqref{eq-Ha} and for comparison of Li-Xu's estimate \eqref{eq-LX} with the Li-Yau-Davies estimate \eqref{eq-LYD}, and show \eqref{eq-dim-2}.
\section{Li-Yau multiplier set}
In this section, we give some simple properties of Li-Yau multiplier sets and prove Theorem \ref{thm-LY-set}.

First of all, we have the following elementary properties:
\begin{prop}\label{prop-ele-p}
Let $(M^n,g)$ be a complete Riemannian manifold and $u,v\in \mathcal P(M,g)$. Then, \begin{enumerate}
\item $S(u,x)$ is a closed subset of $(0,+\infty)\times[0,\infty)\times \R$.
\item $S_t(u,x)$ is convex.
\item if $(\beta,\gamma)\in S_t(u,x)$, then $[0,\beta]\times [\gamma,+\infty)\subset S_t(u,x)$.
\item $S(u+v,x)\supset S(u,x)\cap S(v,x)$.
\end{enumerate}
\end{prop}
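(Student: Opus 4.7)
My plan is to dispatch all four claims from the observation that the defining inequality
\[
\beta\|\nabla\log u\|^2(t,x)-(\log u)_t(t,x)\leq\gamma
\]
is affine in $(\beta,\gamma)$ with a \emph{nonnegative} coefficient in front of $\beta$. For (1), since $u>0$ is smooth on $\R^+\times M$, the map $(t,\beta,\gamma)\mapsto \beta\|\nabla\log u\|^2(t,x)-(\log u)_t(t,x)-\gamma$ is jointly continuous on $(0,\infty)\times[0,\infty)\times\R$, and $S(u,x)$ is exactly the preimage of $(-\infty,0]$; closedness is immediate.

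For (2) and (3), I would freeze $t>0$ and abbreviate $A:=\|\nabla\log u\|^2(t,x)\geq 0$ and $B:=-(\log u)_t(t,x)$. Then $S_t(u,x)$ is the intersection of $[0,\infty)\times\R$ with the closed half-plane $\{\beta A+B\leq\gamma\}$, which is convex, giving (2). For (3), if $\beta A+B\leq\gamma$ and $\beta'\in[0,\beta]$, $\gamma'\geq\gamma$, then $\beta'A+B\leq\beta A+B\leq\gamma\leq\gamma'$, where the step $\beta'A\leq\beta A$ uses precisely $A\geq 0$.

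Only (4) has any content. I would use the pointwise convex-combination identities
\[
\nabla\log(u+v)=p\,\nabla\log u+q\,\nabla\log v,\qquad (\log(u+v))_t=p\,(\log u)_t+q\,(\log v)_t,
\]
where $p:=u/(u+v)$ and $q:=v/(u+v)$ satisfy $p,q\geq 0$ and $p+q=1$. Applying the pointwise convexity of $\xi\mapsto\|\xi\|^2$, and then multiplying by $\beta\geq 0$, I would conclude
\[
\beta\|\nabla\log(u+v)\|^2-(\log(u+v))_t\leq p\bigl[\beta\|\nabla\log u\|^2-(\log u)_t\bigr]+q\bigl[\beta\|\nabla\log v\|^2-(\log v)_t\bigr].
\]
If both bracketed terms are at most $\gamma$ at $(t,x)$, the convex combination inherits the same upper bound, yielding $(t,\beta,\gamma)\in S(u+v,x)$.

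The only subtle point to flag is that the sign condition $\beta\geq 0$ built into the definition of the multiplier set is not cosmetic: it is needed both for the monotonicity in $\beta$ that underlies (3) and for the direction-preserving step in (4) where convexity of $\|\cdot\|^2$ is multiplied by $\beta$. Nothing in the argument uses the manifold structure beyond smoothness of $u,v$, so this is really the main takeaway and the only place where a proof writer could slip up.
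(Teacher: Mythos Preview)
Your proof is correct and matches the paper's: items (1)--(3) are immediate from the definition (the paper says exactly this), and (4) is the only part requiring an argument. Your treatment of (4) via the convex-combination identities $\nabla\log(u+v)=p\,\nabla\log u+q\,\nabla\log v$ and Jensen's inequality for $\|\cdot\|^2$ is a slightly cleaner repackaging of the paper's direct expansion over $(u+v)^2$ followed by the Cauchy--Schwarz bound $2\langle\nabla\log u,\nabla\log v\rangle\leq\|\nabla\log u\|^2+\|\nabla\log v\|^2$; the underlying inequality is the same.
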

\begin{proof}
The properties (1)--(3) are straight forward from definition. We only need to prove (4).

Let $(t,\beta,\gamma)\in S(u,x)\cap S(v,x)$. Then, at $(t,x)$,

\begin{equation*}
\begin{split}
&\beta\|\nabla \log(u+v)\|^2-(\log(u+v))_t\\
=&\frac{(\beta\|\nabla \log u\|^2-(\log u)_t)u^2+(\beta\|\nabla \log v\|^2-(\log v)_t)v^2+2\beta\vv<\nabla u,\nabla v>-u_tv-uv_t}{(u+v)^2}\\
\leq&\frac{\gamma u^2+\gamma v^2+(2\beta\vv<\nabla \log u,\nabla\log v>-(\log u)_t-(\log v)_t)uv}{(u+v)^2}\\
\leq&\frac{\gamma u^2+\gamma v^2+(\beta\|\nabla\log u\|^2-(\log u)_t+\beta\|\nabla \log v\|^2-(\log v)_t)uv}{(u+v)^2}\\
\leq& \gamma.
\end{split}
\end{equation*}
So, $(t,\beta,\gamma)\in S(u+v,x)$.
\end{proof}

We also have the following elementary properties for the Li-Yau multiplier set of a complete Riemannian manifold.
\begin{prop}\label{prop-ele-p-S}
Let $(M^n,g)$ be a complete Riemannian manifold. Then,
\begin{enumerate}
\item $S(M,g,x)$ and $S(M,g)$ are closed in $(0,\infty)\times [0,\infty)\times \R$.
\item $S(M,g,x)$ and $S(M,g)$ are convex.
\item If $(\beta,\gamma)\in S_t(M,g,x)$, then $[0,\beta]\times [\gamma,+\infty)\subset S_t(M,g,x)$. As a consequence, if $(\beta,\gamma)\in S_t(M,g)$, then $[0,\beta]\times [\gamma,+\infty)\subset S_t(M,g)$.
\item Let $\varphi$ be an isometric transformation of $(M,g)$, then $S(M,g,x)=S(M,g,\varphi(x))$. As a consequence, if $(M,g)$ is homogeneous, then $S(M,g)=S(M,g,x)$ for any $x\in M$.
\item Let $(N,h)$ be another complete Riemannian manifold, and $\varphi:M\to N$ be a local isometry. Then $S(M,g,x)\subset S(N,h,\varphi(x))$ for any $x\in M$. As a consequence, $S(M,g)\subset S(N,h)$.
\item $S_{t_1}(M,g,x)\subset S_{t_2}(M,g,x)$ when $0<t_1<t_2$, for any $x\in M$. As a consequence, $S_{t_1}(M,g)\subset S_{t_2}(M,g)$ when $0<t_1<t_2$.
\item $\bigcap_{\tau>t}S_\tau(M,g,x)=S_t(M,g,x)$ for any $x\in M$ and $t>0$. As a consequence, $S_t(M,g)=\cap_{\tau>t}S_\tau(M,g)$.
\item Let $\lambda$ be a positive constant. Then, $(t,\beta,\gamma)\in S(M,\lambda^2g,x)$ if and only if $(\lambda^{-2}t,\beta,\lambda^2\gamma)\in S(M,g,x)$. As a consequence, $(t,\beta,\gamma)\in S(M,\lambda^2g)$ if and only if $(\lambda^{-2}t,\beta,\lambda^2\gamma)\in S(M,g)$.
\item Let $(N,h)$ be another complete Riemannian manifold. Then, $S(M\times N, g\times h,(x,y))\subset S(M,g,x)$ for any $x\in M$ and $y\in N$. As a consequence, $S(M\times N, g\times h)\subset S(M,g)$.
\item Let $(t,\beta,\gamma)\in S(M,g,x)$ and further assume that the Ricci curvature of $(M,g)$ is bounded from below. Then, for any positive solution $u\in C^\infty([0,t]\times M)$ of the heat equation on $M$,
    \begin{equation}
      [\beta\|\nabla \log u\|^2-(\log u)_t](t,x)\leq \gamma.
    \end{equation}
\end{enumerate}
\end{prop}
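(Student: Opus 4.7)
The objective is to extend the Li-Yau inequality from global solutions $u\in\mathcal P(M,g)$ to a positive heat solution $u$ defined only on the time-slab $[0,t]\times M$. My plan is to approximate such a $u$ by a family of globally-defined heat solutions lying in $\mathcal P(M,g)$, apply the hypothesis $(t,\beta,\gamma)\in S(M,g,x)$ to each approximant, and then pass to the limit. The Ricci lower bound assumption is precisely what makes this approximation work.

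Under Ricci curvature bounded from below, the minimal heat kernel $H(s,y,z)$ of $(M,g)$ is smooth, strictly positive, stochastically complete, and satisfies Gaussian upper bounds. Since $u\in C^\infty([0,t]\times M)$, the initial trace $u(0,\cdot)$ is smooth and positive. Choose smooth cutoffs $\phi_R\in C_c^\infty(M)$ with $0\leq \phi_R\leq 1$ and $\phi_R\equiv 1$ on $B(x,R)$, and set
\[
v_R(s,y)\ :=\ \int_M H(s,y,z)\,\phi_R(z)\,u(0,z)\,dz,\qquad (s,y)\in\R^+\times M.
\]
Compact support of the integrand makes $v_R$ smooth and strictly positive on $\R^+\times M$, and $v_R$ solves the heat equation; hence $v_R\in\mathcal P(M,g)$. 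The hypothesis $(t,\beta,\gamma)\in S(M,g,x)\subset S(v_R,x)$ then yields
\[
\bigl[\beta\|\nabla\log v_R\|^2-(\log v_R)_t\bigr](t,x)\leq \gamma
\]
for every $R$.

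To close the argument, I will send $R\to\infty$. Monotone convergence gives $v_R(s,y)\nearrow v(s,y):=\int_M H(s,y,z)\,u(0,z)\,dz$ pointwise on $\R^+\times M$. A uniqueness theorem for positive solutions of the Cauchy problem on complete manifolds with Ricci curvature bounded from below (via stochastic completeness together with the parabolic maximum principle; see e.g.\ Grigor'yan) identifies $v$ with $u$ on $(0,t]\times M$. Interior Schauder estimates then upgrade the pointwise monotone convergence to $C^\infty_{\mathrm{loc}}$ convergence of $v_R$ to $u$ on $(0,t]\times M$, and in particular at the point $(t,x)$ the quantities $v_R$, $\nabla v_R$, and $(v_R)_t$ all converge to their counterparts for $u$. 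The inequality above then passes to the limit. The main obstacle is exactly this identification $v=u$ together with the $C^1$-regularity needed for the first-derivative quantities to converge; this is the step in which the Ricci lower bound is indispensable, since on a general complete manifold the integral defining $v$ may fail to converge and, even when it does, positive heat solutions on a finite time interval need not be unique.
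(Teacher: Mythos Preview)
Your proposal for item (10) is correct and is essentially the same argument as the paper's: the paper also truncates the initial data by cutoffs $\eta_k$ supported in an exhaustion of $M$, defines $u_k$ to be the bounded heat solution with initial data $\eta_k u(0,\cdot)$ (which is exactly your heat-kernel integral $v_R$), applies the defining inequality of $S(M,g,x)$ to each $u_k$, and passes to the limit using uniqueness of nonnegative heat solutions under a Ricci lower bound (citing Donnelly and Li--Yau where you cite Grigor'yan). Your write-up is slightly more explicit about why the convergence is $C^\infty_{\mathrm{loc}}$ (interior Schauder estimates) and about the role of stochastic completeness, but the strategy and the key ingredients coincide.
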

\begin{proof}
The statements (1)--(3) are clearly true by definition. We only give the proofs of (4)--(10).
\begin{enumerate}
\item[(4)] For any $(t,\beta,\gamma)\in S(M,g,x)$ and $u\in \mathcal P(M,g)$, since $\varphi$ is an isometry, $\varphi^*u\in \mathcal P(M,g)$,
    \begin{equation*}
    \begin{split}
    \left[\beta\|\nabla \log u\|^2-(\log u)_t\right](t,\varphi(x))=\left[\beta\|\nabla \log \varphi^*u\|^2-(\log\varphi^* u)_t\right](t,x)\leq\gamma.
    \end{split}
    \end{equation*}
    This means that $S(M,g,x)\subset S(M,g, \varphi(x))$. By applying this to $\varphi^{-1}$, we obtain the conclusion.
\item[(5)]The proof is the same as that of (4). Because a local isometry of complete Riemannian manifolds must be surjective, we obtain the consequence that $S(M,g)\subset S(N,h)$.
\item[(6)] Let $(\beta,\gamma)\in S_{t_1}(M,g,x)$ and $u\in\mathcal P(M,g)$. Let $v(t,x)=u(t+t_2-t_1,x)$. It is clear that $v\in \mathcal P(M,g)$. So
    \begin{equation*}
    \begin{split}
    [\beta\|\nabla \log u\|^2-(\log u )_t](t_2,x)=[\beta\|\nabla \log v\|^2-(\log v)_t](t_1,x)\leq\gamma
    \end{split}
    \end{equation*}
    which means that $(\beta,\gamma)\in S_{t_2}(M,g,x)$. So, $S_{t_1}(M,g)\subset S_{t_2}(M,g)$.
\item[(7)]Let $(\beta,\gamma)\in \bigcap_{\tau>t}S_\tau(M,g,x)$. Then, for any $u\in \mathcal P(M,g)$,
    \begin{equation}
    [\beta\|\nabla\log u\|^2-(\log u)_t](\tau,x)\leq \gamma
    \end{equation}
    for any $\tau>t$. Setting $\tau\to t^+$, we have
    \begin{equation}
    [\beta\|\nabla\log u\|^2-(\log u)_t](t,x)\leq \gamma.
    \end{equation}
    So $(\beta,\gamma)\in S_t(M,g,x)$. Combining this and (6), we get the conclusion.
\item[(8)] Let $(t,\beta,\gamma)\in S(M,\lambda^2g,x)$. For any $u\in \mathcal P(M,g)$, let $v(t,x)=u(\lambda^{-2}t,x)$. Then, $v\in \mathcal P(M,\lambda^2g)$. So,
    \begin{equation}
    [\beta\|\nabla_{\lambda^2g}\log v\|^2-(\log v)_t](t,x)\leq \gamma.
    \end{equation}
    This implies that
    \begin{equation}
    [\beta\|\nabla \log u\|^2-(\log u)_t](\lambda^{-2}t,x)\leq\lambda^2\gamma.
    \end{equation}
    So $(\lambda^{-2}t,\beta,\lambda^2\gamma)\in S(M,g,x)$. The converse can be proved similarly.
\item[(9)] For any $(t,\beta,\gamma)\in S(M\times N,g\times h,(x,y))$ and $u\in \mathcal P(M,g,x)$. Let $v(t,x,y)=u(t,x)$. Then $v\in \mathcal P(M\times N,g\times h)$. So
    \begin{equation*}
    \begin{split}
    [\beta\|\nabla \log u\|^2-(\log u)_t](t,x)= [\beta\|\nabla \log v\|^2-(\log v)_t](t,x,y)    \leq \gamma,
    \end{split}
    \end{equation*}
    and hence $(t,\beta,\gamma)\in S(M,g)$.
\item[(10)] When $M$ is compact, the conclusion is clear. When $M$ is noncompact, let $\Omega_1\subset\subset\Omega_2\subset\subset\cdots\subset\subset\Omega_k\subset\subset\cdots$
    be a sequence of domains exhausts $M$ and $\eta_k$ be a smooth function on $M$ with $\supp \eta_k\subset \Omega_{k+1}$, $0\leq\eta_k\leq 1$ and $\eta_k|_{\Omega_k}\equiv 1$. Let $u_k(t,x)$ be a bounded solution of the heat equation on $M$ with $u_k(0,x)=\eta_k(x)u(0,x)$. Then $u_k\in \mathcal P(M,g)$ and $u_k\to u$ as $k\to\infty$ smoothly by the uniqueness of nonnegative solutions for heat equations when Ricci curvature of $(M,g)$ is bounded from below (see \cite{Do,LY}). So, for any $(t,\beta,\gamma)\in S(M,g,x)$,
    \begin{equation}
      [\beta\|\nabla\log u_k\|^2-(\log u_k)_t](t,x)\leq \gamma.
    \end{equation}
    By setting $k\to \infty$ in the last inequality, we get the conclusion.
\end{enumerate}

\end{proof}
Next, we come to prove Theorem \ref{thm-LY-set}.
\begin{proof}[Proof of Theorem \ref{thm-LY-set}] Since $H(\cdot,\cdot,y)\in \mathcal P(M,g)$ for any $y\in M$,
we only need to show that $S(H(\cdot,\cdot,y),x)\subset S(u,x)$ for any $u\in \mathcal P(M,g)$ that is smooth up to $t=0$ by using the trick in the proof of (6) in Proposition \ref{prop-ele-p-S}. Moreover, by using the trick in the proof of (10) in Proposition \ref{prop-ele-p-S}, we only need to show that $S(H(\cdot,\cdot,y),x)\subset S(u,x)$ for any $u\in \mathcal P(M,g)$ that is smooth up to $t=0$ and $u(0,x)=f(x)$ is of compact support. Then,$u(t,x)=\int_M H(t,x,y)f(y)dy$ by uniqueness of positive solutions of the heat equations (see \cite{Do,LY}). So, for any $(t,\beta,\gamma)\in\bigcap_{y\in M}S(H(\cdot,\cdot,y),x)$, we have that, at $(t,x)$,
\begin{equation*}
\begin{split}
&\beta\|\nabla \log u\|^2-(\log u)_t\\
=&\left(\int_MH(t,x,y)f(y)dy\right)^{-2}\Bigg(\beta\int_M \int_M\vv<\nabla_x H(t,x,y),\nabla_x H(t,x,z)>f(y)f(z)dydz-\\
&\int_M\int_MH_t(t,x,y)H(t,x,z)f(y)f(z)dydz\Bigg)\\
=&\frac{1}{2}\left(\int_MH(t,x,y)f(y)dy\right)^{-2}\Bigg(2\beta\int_M \int_M\vv<\nabla_x H(t,x,y),\nabla_x H(t,x,z)>f(y)f(z)dydz\\
&-\int_M\int_M(H_t(t,x,y)H(t,x,z)+H(t,x,y)H_t(t,x,z))f(y)f(z)dydz\Bigg)\\
=&\frac{1}{2}\left(\int_MH(t,x,y)f(y)dy\right)^{-2}\times\\
&\Bigg(2\beta\int_M \int_M\vv<\nabla_x \log H(t,x,y),\nabla_x \log H(t,x,z)>H(t,x,y)H(t,x,z)f(y)f(z)dydz\\
&-\int_M\int_M((\log H(t,x,y))_t+(\log H(t,x,z))_t)H(t,x,y)H(t,x,z)f(y)f(z)dydz\Bigg)\\
\leq&\frac{1}{2}\left(\int_MH(t,x,y)f(y)dy\right)^{-2}\times\\
&\Bigg(\beta\int_M \int_M(\|\nabla_x \log H(t,x,y)\|^2+\|\nabla_x \log H(t,x,z)\|^2)H(t,x,y)H(t,x,z)f(y)f(z)dydz\\
&-\int_M\int_M((\log H(t,x,y))_t+(\log H(t,x,z))_t)H(t,x,y)H(t,x,z)f(y)f(z)dydz\Bigg)\\
\leq&\gamma.
\end{split}
\end{equation*}
This completes the proof of the theorem.
\end{proof}
Similarly as in the fundamental work of Li-Yau \cite{LY}, one has the following relation of Li-Yau multiplier set and Harnack inequality for positive solution of heat equation by the same argument as in \cite{LY} (see also \cite{LX}).
\begin{prop}\label{prop-harnack-g}
Let $(M^n,g)$ be a complete Riemannian manifold and let $(t,\beta(t),\gamma(t))$ with $t\in (a,b]$ be a curve in $S(M,g)$ where $0\leq a<b$. Then, for any  $u\in \mathcal P(M,g)$, $a< t_1<t_2\leq b$ and $x_1,x_2\in M$,
\begin{equation}
u(x_1,t_1)\leq u(x_2,t_2)\exp\left(\frac{r^2(x_1,x_2)}{4(t_2-t_1)^2}\int_{t_1}^{t_2}\frac{1}{\beta(t)}dt+\int_{t_1}^{t_2}\gamma(t)dt\right).
\end{equation}
\end{prop}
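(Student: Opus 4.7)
The plan is to run the standard Li-Yau integration-along-a-spacetime-path argument, using the curve $(t,\beta(t),\gamma(t))\subset S(M,g)$ as the pointwise differential inequality fed into the estimate.

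First, I would fix $u\in\mathcal{P}(M,g)$, $x_1,x_2\in M$ and $a<t_1<t_2\leq b$, and by Hopf-Rinow choose a minimizing geodesic $\sigma_0:[0,r]\to M$ of unit speed from $x_1$ to $x_2$, where $r=r(x_1,x_2)$. Re-parameterize linearly in time by setting $\sigma(t)=\sigma_0\!\left(r\,\tfrac{t-t_1}{t_2-t_1}\right)$ for $t\in[t_1,t_2]$, so that $\sigma(t_1)=x_1$, $\sigma(t_2)=x_2$ and $|\dot\sigma(t)|\equiv \tfrac{r}{t_2-t_1}$. Define $\phi(t)=\log u(\sigma(t),t)$, which is smooth since $u>0$ is smooth and $\sigma$ is smooth.

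Next, I would differentiate along this spacetime curve. A direct computation gives
\begin{equation*}
\frac{d\phi}{dt}=\la\nabla\log u,\dot\sigma\ra+(\log u)_t\geq -\|\nabla\log u\|\cdot|\dot\sigma|+(\log u)_t.
\end{equation*}
Apply Young's inequality $ab\leq \beta(t)a^2+\tfrac{b^2}{4\beta(t)}$ with $a=\|\nabla\log u\|$ and $b=|\dot\sigma|$, valid for $\beta(t)>0$ (if $\beta(t)=0$ at isolated points the estimate still passes to the limit, or we handle the endpoints by continuity since $\gamma(t)\geq 0$ forces no issue). This yields
\begin{equation*}
\frac{d\phi}{dt}\geq -\bigl(\beta(t)\|\nabla\log u\|^2-(\log u)_t\bigr)-\frac{|\dot\sigma|^2}{4\beta(t)}.
\end{equation*}
Since $(t,\beta(t),\gamma(t))\in S(M,g)$, the first parenthesized quantity is bounded above by $\gamma(t)$ at $(\sigma(t),t)$, and $|\dot\sigma|^2=\tfrac{r^2}{(t_2-t_1)^2}$ is constant in $t$, so
\begin{equation*}
\frac{d\phi}{dt}\geq -\gamma(t)-\frac{r^2}{4(t_2-t_1)^2}\cdot\frac{1}{\beta(t)}.
\end{equation*}

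Finally, I would integrate this inequality from $t_1$ to $t_2$. Since $\phi(t_2)-\phi(t_1)=\log u(x_2,t_2)-\log u(x_1,t_1)$, one gets
\begin{equation*}
\log\frac{u(x_1,t_1)}{u(x_2,t_2)}\leq \int_{t_1}^{t_2}\gamma(t)\,dt+\frac{r^2(x_1,x_2)}{4(t_2-t_1)^2}\int_{t_1}^{t_2}\frac{dt}{\beta(t)},
\end{equation*}
and exponentiating gives the stated Harnack inequality. There is no real obstacle here: the argument is a direct adaptation of Li-Yau \cite{LY}, with the linear-in-$t$ parameterization of $\sigma$ chosen precisely so that $|\dot\sigma|^2$ factors out to produce $\tfrac{r^2}{4(t_2-t_1)^2}\int\tfrac{dt}{\beta(t)}$ rather than the Cauchy-Schwarz-optimal $\tfrac{r^2}{4\int\beta\,dt}$. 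The only mildly delicate point is that $\beta(t)$ could vanish at isolated times, but one may restrict to the open subset $\{\beta>0\}$ and pass to the limit, or simply assume $\beta(t)>0$ on $(a,b]$, which is the setting in all applications (Theorems \ref{thm-odd} and \ref{thm-even}) since the candidate $\beta$'s there lie in $[0,1)$ with $\beta>0$ on the integration interval.
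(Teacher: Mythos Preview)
Your proof is correct and is precisely the standard Li--Yau integration-along-a-path argument that the paper itself invokes: the paper does not give a written proof of this proposition but simply refers to ``the same argument as in \cite{LY} (see also \cite{LX}).'' Your choice of linear-in-$t$ reparametrization of the geodesic is exactly what produces the factor $\frac{r^2}{4(t_2-t_1)^2}\int\frac{dt}{\beta(t)}$ appearing in the statement, so there is no divergence in approach.
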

\section{Optimal Li-Yau type gradient estimate on hyperbolic spaces }
The strategy for proving Theorem \ref{thm-odd} and Theorem \ref{thm-even} is similar with that of \cite{DM}. We first prove Theorem \ref{thm-odd} using the recurrence relation \eqref{eq-re-s2}. Then prove Theorem \ref{thm-even} by using the recurrence relation \eqref{eq-re-s1}.

The same as in \cite{DM}, we write the heat kernel $K_n$ of $\mathbb H^n$ as
\begin{equation}\label{eq-K}
K_n(t,r)=(4\pi t)^{-\frac n2}e^{-\frac{(n-1)^2}{4}t-\frac{r^2}{4t}}\alpha_n(t,r).
\end{equation}
Then, by \eqref{eq-re-s2}, $\alpha_n(t,r)$ satisfies the following recursive identity:
\begin{equation}\label{eq-re-alpha}
\alpha_n=\frac{r}{\sinh r}\alpha_{n-2}-\frac{2t}{\sinh r}\frac{\p\alpha_{n-2}}{\p r}.
\end{equation}
Moreover, $\alpha_1=1$ and $\alpha_3=\frac{r}{\sinh r}$. Let $f_1=\frac{r}{\sinh r}$ and $\sigma=\cosh r$. Then, by \eqref{eq-re-alpha},
\begin{equation}\label{eq-re-alpha-sig}
 \alpha_n=f_1\alpha_{n-2}-2t\frac{\p\alpha_{n-2}}{\p \sigma}.
 \end{equation}
 As mentioned in \cite{DM}, by induction, it is not hard to see that
 \begin{equation}\label{eq-alpha-odd}
 \alpha_{2m+1}=\sum_{i=0}^{m-1} t^iP_{m,i}(f_1,f_2,\cdots,f_m)
 \end{equation}
with $P_{m,0}(T_1,T_2,\cdots,T_m)=T_1^m$ and $P_{m,m-1}(T_1,T_2,\cdots,T_m)=2^{m-1}T_m$, where $$f_{m+1}=-\frac{df_{m}}{d\sigma}=-\frac{1}{\sinh r}\frac{d f_m}{dr}$$
 for $m=1,2,\cdots$. Here $P_{m,i}(T_1,T_2,\cdots,T_m)$'s are polynomials with nonnegative coefficients. As mentioned in \cite{DM}, by that $\alpha_{2m+1}$ is positive, $f_m$ is decreasing and positive. By making more detailed analysis on $\alpha_{2m+1}$ and $f_m$, we have the following results which will be used later.

\begin{prop}\label{prop-f}Let $q_m=\frac{f_{m+1}}{f_m}$ for $m=1,2,\cdots$. Then
\begin{enumerate}
\item $q_m(0)=\frac{m^2}{2m+1}$ for $m=1,2,\cdots$;
\item $\lim_{r\to\infty}q_m(r)\cosh r=m$ for $m=1,2,\cdots$;
\item $q_m(r)\cosh r\leq m$ and as a consequence $0\leq (-\log f_m)_r\leq m$ for $m=1,2,\cdots$;
\item $0\leq \cosh r(q_{m+1}(r)-q_m(r))\leq 1$ and as a consequence, $0\leq-(\log q_m)_r\leq 1$  for $m=1,2,\cdots$;
\item $P_{m,i}(T_1,T_2,\cdots,T_m)$ is a homogenous polynomial of degree $m-i$ for $m=1,2,\cdots$ and $i=0,1,\cdots,m-1$;
\item $P_{m,i}(T_1,T_2,\cdots,T_{m})$ is a weighted homogenous polynomial of degree $m$ with nonnegative coefficients for $m=1,2,\cdots$ and $i=0,1,2,\cdots,m-1$, when counting the degree of $T_j$ as $j$ for $j=1,2,\cdots,m$.
\end{enumerate}
\end{prop}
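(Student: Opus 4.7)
My plan is to extract all six assertions from two ingredients: a three-term recurrence for $\{f_m\}$, and the Laplace/Bessel integral representation of $f_1$.

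First, differentiating the identity $f_1 \sinh r = r$ once yields $(\sigma^2 - 1) f_2 = \sigma f_1 - 1$, and then induction on $m$ (using $f_{k+1} = -df_k/d\sigma$) gives the three-term recurrence
\begin{equation*}
(\sigma^2 - 1) f_{m+1} = (2m-1)\sigma f_m - (m-1)^2 f_{m-1}, \qquad m \geq 2.
\end{equation*}
Evaluating at $\sigma = 1$ forces $(2m-1) f_m(1) = (m-1)^2 f_{m-1}(1)$, which is (1). Writing $Q_m := \sigma q_m$ and dividing the recurrence by $\sigma f_m$ converts it into $(1 - 1/\sigma^2) Q_m = (2m-1) - (m-1)^2/Q_{m-1}$, from which (2) follows by induction once the base case $Q_1 = \coth^2 r - \coth r/r \to 1$ as $r \to \infty$ is checked directly.

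The main substance lies in (3) and (4). For these I will use the classical Laplace transform identity $\int_0^\infty K_0(s) e^{-\sigma s}\, ds = \mathrm{arccosh}(\sigma)/\sqrt{\sigma^2 - 1} = f_1(\sigma)$ (valid for $\sigma > 1$, where $K_\nu$ denotes the modified Bessel function of the second kind) combined with $f_m = (-d/d\sigma)^{m-1} f_1$ to obtain $f_m(\sigma) = \int_0^\infty s^{m-1} K_0(s) e^{-\sigma s}\, ds$. Integration by parts using $K_0' = -K_1$ then gives $\sigma f_{m+1} = m f_m - J_m$ with $J_m := \int_0^\infty s^m K_1(s) e^{-\sigma s}\, ds > 0$, so $Q_m = m - J_m/f_m < m$, which is (3). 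The lower bound of (4), $Q_{m+1} \geq Q_m$, follows from $f_{m+1}^2 \leq f_m f_{m+2}$, i.e.\ Cauchy--Schwarz on $L^2(K_0(s) e^{-\sigma s}\, ds)$. The upper bound $Q_{m+1} - Q_m \leq 1$ is equivalent to $J_{m+1} f_m \geq J_m f_{m+1}$; symmetrizing the integrand in $(s, t)$ turns this into the monotonicity of $\phi(s) := sK_1(s)/K_0(s)$. Using the Bessel ODE $K_0'' + K_0'/s - K_0 = 0$, a short computation yields $\phi'(s) = s(K_1^2 - K_0^2)/K_0^2 \geq 0$, where $K_1 \geq K_0$ on $(0, \infty)$ is immediate from the integral representation $K_\nu(s) = \int_0^\infty e^{-s \cosh t} \cosh(\nu t)\, dt$. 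The derivative consequences $-(\log f_m)_r \in [0, m]$ and $-(\log q_m)_r \in [0, 1]$ are then automatic since $\sinh r \leq \cosh r$.

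Claims (5) and (6) are independent of the above and follow by induction on $m$ from the polynomial recurrence $P_{m,i} = T_1 P_{m-1, i} + 2 \sum_j (\partial P_{m-1, i-1}/\partial T_j) T_{j+1}$ inherited from $\alpha_{2m+1} = f_1 \alpha_{2m-1} - 2t\, \partial_\sigma \alpha_{2m-1}$: each term on the right preserves both the ordinary degree $m - i$ and the weighted degree $m$ (where $T_j$ has weight $j$), and nonnegativity of coefficients is preserved since partial derivatives of nonnegative-coefficient polynomials are again such. The main obstacle is the upper bound in (4); the key idea is passing to the Bessel integral kernel, after which the estimate becomes an FKG-type symmetrization and reduces to the one-variable monotonicity of $sK_1/K_0$.
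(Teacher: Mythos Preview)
Your proof is correct. For items (1), (2), (5), and (6) you follow essentially the paper's route: the same three-term recurrence (yours is the paper's $m^2 f_m-(2m+1)\sigma f_{m+1}+(\sigma^2-1)f_{m+2}=0$ after an index shift), the same evaluation at $\sigma=1$, the same asymptotics via the recurrence, and the same polynomial induction from $\alpha_{2m+1}=f_1\alpha_{2m-1}-2t\,\partial_\sigma\alpha_{2m-1}$.

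The genuine divergence is in (3) and (4). The paper never leaves the recurrence: for (3) it observes that $g_m:=mf_m-\sigma f_{m+1}$ satisfies $mg_m+\sigma(g_m)_\sigma>0$, whence $(\sigma^m g_m)_\sigma>0$ and $g_m>0$ by its sign at $\sigma=1$; for (4) it runs two similar but more delicate differential-inequality arguments on $q_m-q_{m+1}$ and on $1+\sigma(q_m-q_{m+1})$, each time producing an integrating factor and reading off the sign from a boundary value. Your Laplace--Bessel representation $f_m(\sigma)=\int_0^\infty s^{m-1}K_0(s)e^{-\sigma s}\,ds$ is a different and arguably cleaner mechanism: (3) becomes a single integration by parts, the lower bound in (4) is Cauchy--Schwarz on the measure $K_0(s)e^{-\sigma s}\,ds$, and the upper bound reduces by symmetrization to the monotonicity of $sK_1(s)/K_0(s)$, which drops out of the Bessel ODE. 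Your approach also delivers the positivity of all $f_m$ for free, which the paper imports from \cite{DM}. The trade-off is that the paper's argument is entirely self-contained in elementary calculus on the recurrence, while yours requires the Laplace transform of $K_0$ and two Bessel identities as external input.
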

\begin{proof}
By taking derivative to $f_1\sinh r=r$ with respect to $r$, one has
\begin{equation}\label{eq-f-1}
\sigma f_1-(\sigma^2-1)f_2=1.
\end{equation}
Taking $m^{th}$ derivative to the last equality with respect to $\sigma$, we have
 \begin{equation}\label{eq-f-rec}
 m^2f_m-(2m+1)\sigma f_{m+1}+(\sigma^2-1)f_{m+2}=0
 \end{equation}
 for any $m=1,2,\cdots$.\\
(1) Let $\sigma=1$, i.e. $r=0$ in \eqref{eq-f-rec}. We have
 \begin{equation}
 m^2 f_m(0)=(2m+1)f_{m+1}(0).
 \end{equation}
 So $q_{m}(0)=\frac{m^2}{2m+1}$.\\
(2) By the expression of $f_1$, it is clear that
\begin{equation}
\lim_{r\to\infty}\frac{\cosh r}{r}f_1=1.
\end{equation}
Then, by \eqref{eq-f-1},
\begin{equation}
\lim_{r\to\infty}\frac{\cosh^2r}{r}f_2=1.
\end{equation}
By \eqref{eq-f-rec} and induction, $\lim_{r\to\infty}\frac{\cosh^mr f_m}{r}=a_m$ exists. Moreover, \begin{equation}\label{eq-a}
m^2a_m-(2m+1) a_{m+1}+a_{m+2}=0
\end{equation}
with $a_1=a_2=1$. By \eqref{eq-a},
\begin{equation}
m(ma_m-a_{m+1})=(m+1)a_{m+1}-a_{m+2}.
\end{equation}
So, we have $ma_m-a_{m+1}=0$ for any $m=1,2,\cdots$. This implies that
\begin{equation}
\lim_{r\to\infty}q_{m}(r)\cosh r=m.
\end{equation}
(3) By \eqref{eq-f-rec},
\begin{equation}
\begin{split}
m(mf_m-\sigma f_{m+1})-\sigma ((m+1)f_{m+1}-\sigma f_{m+2})=f_{m+2}>0.
\end{split}
\end{equation}
Moreover
\begin{equation}
(mf_m-\sigma f_{m+1})_\sigma=-((m+1)f_{m+1}-\sigma f_{m+2}).
\end{equation}
Substituting this into the last inequality, we have
\begin{equation}
m(mf_m-\sigma f_{m+1})+\sigma(mf_m-\sigma f_{m+1})_\sigma>0.
\end{equation}
This implies that
\begin{equation}
\left(\sigma^m(mf_m-\sigma f_{m+1})\right)_\sigma>0.
\end{equation}
Therefore, by (1),
\begin{equation}
\sigma^m(mf_m-\sigma f_{m+1})\geq (mf_m-\sigma f_{m+1})|_{\sigma=1}=f_m(0)(m-q_{m}(0))=\frac{m^2+m}{2m+1}f_m(0)>0.
\end{equation}
Furthermore,
\begin{equation}
(f_m)_r+mf_m=-\sinh r f_{m+1}+mf_m\geq -\cosh r f_{m+1}+mf_m=f_m(m-\cosh r q_m)\geq 0.
\end{equation}
So, $0\leq (-\log f_m)_r\leq m$ by that $f_m$ is decreasing.\\
(4) By \eqref{eq-f-rec},
\begin{equation}\label{eq-qm}
\begin{split}
m^2-(2m+1)\sigma q_m+(\sigma^2-1)q_mq_{m+1}=0
\end{split}
\end{equation}
and
\begin{equation}\label{eq-qm+1}
(m+1)^2-(2m+3)\sigma q_{m+1}+(\sigma^2-1)q_{m+1}q_{m+2}=0.
\end{equation}
Taking subtraction of the last two equalities, we have
\begin{equation}
\begin{split}
q_{m+1}(q_m-q_{m+2})=&\frac{2m+1-2\sigma q_m}{\sigma^2-1}+\frac{(2m+3)\sigma}{\sigma^2-1}(q_m-q_{m+1})\\
>& \frac{(2m+3)\sigma}{\sigma^2-1}(q_m-q_{m+1})\\
\end{split}
\end{equation}
where we have used (3) in the last inequality. Then,
\begin{equation}
\begin{split}
(q_m-q_{m+1})_\sigma=&q_m^2-q_{m+1}^2-q_{m+1}(q_m-q_{m+2})\\
<&\left(q_m+q_{m+1}-\frac{(2m+3)\sigma}{\sigma^2-1}\right)(q_m-q_{m+1}).
\end{split}
\end{equation}
So,
\begin{equation}
\left((\sigma^2-1)^\frac{2m+3}{2}e^{-Q_m-Q_{m+1}}(q_m-q_{m+1})\right)_\sigma<0
\end{equation}
where $Q_m=\int q_md\sigma$. This implies that $q_m-q_{m+1}\leq 0$.

Furthermore, taking substraction of \eqref{eq-qm+1} and \eqref{eq-qm}, one has
\begin{equation}
\begin{split}
&(2m+1)(1+\sigma(q_m-q_{m+1}))\\
=&\frac{\sigma^2-1}{\sigma}q_{m+1}\left(\frac{2\sigma^2}{\sigma^2-1}+\sigma (q_{m}-q_{m+2})\right)\\
\geq&\frac{\sigma^2-1}{\sigma}q_{m+1}[(1+\sigma(q_m-q_{m+1}))+(1+\sigma(q_{m+1}-q_{m+2}))]
\end{split}
\end{equation}
So
\begin{equation}
q_{m+1}[1+\sigma(q_{m+1}-q_{m+2})]\leq \left(\frac{(2m+1)\sigma}{\sigma^2-1}-q_{m+1}\right)(1+\sigma(q_m-q_{m+1})).
\end{equation}
Then,
\begin{equation}\label{eq-q-d}
\begin{split}
&[1+\sigma(q_m-q_{m+1})]_\sigma\\
=&(q_m-q_{m+1})+\sigma(q_m(q_m-q_{m+1})-q_{m+1}(q_{m+1}-q_{m+2}))\\
=&q_m(1+\sigma(q_m-q_{m+1}))-q_{m+1}(1+\sigma (q_{m+1}-q_{m+2}))\\
\geq&\left(q_m+q_{m+1}-\frac{(2m+1)\sigma}{\sigma^2-1}\right)(1+\sigma(q_m-q_{m+1})).
\end{split}
\end{equation}
Similarly as before, this implies that
\begin{equation}
1+\sigma(q_m-q_{m+1})\geq 0.
\end{equation}
Moreover, note that $(q_m)_\sigma=q_m(q_m-q_{m+1})\leq 0$. So, $q_m$ is decreasing. Furthermore,
\begin{equation}
\begin{split}
(q_m)_r+q_m=&\sinh r(q_m)_\sigma+q_m\\
=&[\sinh r(q_m-q_{m+1})+1]q_m\\
\geq&[\cosh r(q_m-q_{m+1})+1]q_m\\
\geq&0.
\end{split}
\end{equation}
So $0\leq-(\log q_m)_r\leq 1$.\\
(5) By \eqref{eq-re-alpha-sig} and \eqref{eq-alpha-odd},
\begin{equation}\label{eq-re-P}
P_{m+1,i}=T_1P_{m,i}+2\sum_{j=1}^m\frac{\p P_{m,i-1}}{\p T_j}T_{j+1}
\end{equation}
for $i=0,1,\cdots,m$. Here, we take $P_{m,-1}=P_{m,m}=0$. The conclusion follows by induction on $m$ and \eqref{eq-re-P}.\\
(6) The conclusion follows by induction on $m$ and \eqref{eq-re-P}.

\end{proof}

By the analysis of $f_m$ in Proposition \ref{prop-f}, we have the following estimates of $\alpha_n$.
\begin{prop}\label{prop-alpha}
\begin{enumerate}
\item For $m=1,2,\cdots$,
\begin{equation}
0\leq (\log\alpha_{2m+1})_t\leq \frac{m-1}{t}
\end{equation}
for all $t>0$. Hence
\begin{equation}
\lim_{t\to\infty}(\log\alpha_{2m+1})_t=0
\end{equation}
uniformly for $r\geq 0$.
\item For $m=1,2,\cdots$, $0\leq-(\log\alpha_{2m+1})_r\leq m$ and $$\lim_{r\to\infty}-(\log\alpha_{2m+1})_r=m$$
uniformly for $t>0$.

\item For $m=1,2,\cdots$,
\begin{equation}0\leq -\left(\log \alpha_{2m}\right)_r\leq m-\frac{1}{2}.\end{equation}

\end{enumerate}
\end{prop}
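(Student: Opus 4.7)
The plan is to dispatch parts (1) and (2) using the explicit polynomial representation $\alpha_{2m+1} = \sum_{i=0}^{m-1} t^i P_{m,i}(f_1,\ldots,f_m)$ from \eqref{eq-alpha-odd} together with Proposition \ref{prop-f}, and to treat part (3) via the integral recurrence \eqref{eq-re-s1}.

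For (1), I first observe that each $P_{m,i}(f_1,\ldots,f_m)$ is strictly positive: by Proposition \ref{prop-f}(6) the coefficients of $P_{m,i}$ are nonnegative, and each $f_j>0$. Hence a direct differentiation gives
\begin{equation*}
(\log \alpha_{2m+1})_t = \frac{\sum_{i=1}^{m-1} i\, t^{i-1} P_{m,i}}{\sum_{i=0}^{m-1} t^i P_{m,i}} = \frac{1}{t}\sum_{i=0}^{m-1} i\,\lambda_i,
\end{equation*}
where $\lambda_i = t^i P_{m,i}/\alpha_{2m+1}$ form a convex combination. Since $i\in\{0,\ldots,m-1\}$, this lies in $[0,(m-1)/t]$, and the bound $(m-1)/t\to 0$ gives uniform convergence in $r$.

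For (2), the upper bound uses weighted Euler's identity. By Proposition \ref{prop-f}(6), $P_{m,i}(T_1,\ldots,T_m)$ is weighted homogeneous of degree $m$ with weights $(1,2,\ldots,m)$, so $\sum_{j} j\, T_j\partial_j P_{m,i} = m\,P_{m,i}$. Writing $W_j = (T_j\partial_j P_{m,i}/P_{m,i})|_{T=f}\geq 0$, we have $\sum_j jW_j = m$, and the chain rule gives
\begin{equation*}
-(\log P_{m,i})_r = \sum_j W_j \cdot \bigl(-(\log f_j)_r\bigr) \leq \sum_j W_j\cdot j = m,
\end{equation*}
using $-(\log f_j)_r\leq j$ from Proposition \ref{prop-f}(3). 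Averaging over $i$ with the positive weights $t^i P_{m,i}/\alpha_{2m+1}$ yields $0\leq -(\log\alpha_{2m+1})_r\leq m$. For the limit, $-(\log f_j)_r = \tanh r \cdot (q_j(r)\cosh r) \to j$ by Proposition \ref{prop-f}(2), so $-(\log P_{m,i})_r - m = \sum_j W_j[-(\log f_j)_r - j] \to 0$; the bound $W_j\leq m/j$ combined with the finite set of indices $i$ makes the convergence uniform in $t>0$.

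For (3), I combine \eqref{eq-re-s1} and \eqref{eq-K} to get
\begin{equation*}
\alpha_{2m}(t,r) = (2\pi t)^{-1/2} e^{r^2/(4t)} \int_r^\infty \frac{e^{-\rho^2/(4t)}\,\alpha_{2m+1}(t,\rho)\,\sinh\rho}{(\cosh\rho-\cosh r)^{1/2}}\,d\rho.
\end{equation*}
After regularizing the singularity at $\rho=r$ by the substitution $u=(\cosh\rho-\cosh r)^{1/2}$, differentiation under the integral produces, using $\partial\rho/\partial r = \sinh r/\sinh\rho$, the formula
\begin{equation*}
-(\log\alpha_{2m})_r = -\tfrac{r}{2t} + \sinh r\cdot \mathbb{E}_\mu\!\left[\tfrac{\Phi(\rho)}{\sinh\rho}\right],
\end{equation*}
where $\Phi(\rho)=\rho/(2t)+\phi(\rho)$, $\phi=-(\log\alpha_{2m+1})_\rho\in[0,m]$ by part (2), and $\mu$ is the probability measure proportional to $(\cosh\rho-\cosh r)^{-1/2}\alpha_{2m+1}\sinh\rho\,e^{-\rho^2/(4t)}\,d\rho$ on $[r,\infty)$. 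The lower bound $\geq 0$ follows from $\Phi\geq 0$ and a careful comparison using $\sinh\rho\geq \sinh r\cosh(\rho-r)$, together with the monotonicity $\rho/\sinh\rho\leq r/\sinh r$. For the upper bound, the naive estimate $\Phi\leq m+\rho/(2t)$ only gives $-(\log\alpha_{2m})_r\leq m$; to gain the extra $1/2$, I would apply a second integration by parts using
\begin{equation*}
\partial_\rho\bigl[\alpha_{2m+1}e^{-\rho^2/(4t)}\bigr] = -\Phi\,\alpha_{2m+1}e^{-\rho^2/(4t)}, \qquad \partial_\rho\bigl[2(\cosh\rho-\cosh r)^{1/2}\bigr] = \frac{\sinh\rho}{(\cosh\rho-\cosh r)^{1/2}},
\end{equation*}
which converts the integral with singular weight $(\cosh\rho-\cosh r)^{-1/2}$ into one with smoothing weight $(\cosh\rho-\cosh r)^{+1/2}$; the dimensional mismatch between these two half-integer weights is exactly what produces the $-1/2$ correction to the bound from part (2).

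The main obstacle is this last step of part (3): the inequality $-(\log\alpha_{2m})_r\leq m-1/2$ cannot hold pointwise in the integrand (the combination $(m-1/2+r/(2t))\sinh\rho - \sinh r\,\Phi$ is negative near $\rho=r$), so the improvement must emerge from a global cancellation. Identifying the precise IBP and verifying convergence of the regularized boundary terms at the singularity $\rho=r$ is the delicate part of the argument.
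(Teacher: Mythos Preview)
Your treatments of parts (1) and (2) are correct and coincide with the paper's: both exploit the polynomial representation \eqref{eq-alpha-odd}, the weighted homogeneity in Proposition~\ref{prop-f}(6), and the bound $-(\log f_j)_r\le j$ from Proposition~\ref{prop-f}(3).

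Part (3) is where there is a genuine gap. Your integration-by-parts plan for the upper bound is only a sketch, as you acknowledge, and it is not clear it can be closed. Your lower bound is also not justified: from your own formula $-(\log\alpha_{2m})_r=-\tfrac{r}{2t}+\sinh r\cdot\mathbb E_\mu[\Phi/\sinh\rho]$, the ingredients you cite point the wrong way. Indeed $\Phi\ge\rho/(2t)$ gives $\sinh r\cdot\mathbb E_\mu[\Phi/\sinh\rho]\ge\tfrac{1}{2t}\mathbb E_\mu[\rho\sinh r/\sinh\rho]$, but since $\rho\mapsto\rho/\sinh\rho$ is decreasing one has $\rho\sinh r/\sinh\rho\le r$ for $\rho\ge r$, so this only yields an expression bounded above by $r/(2t)$, not below. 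Nonnegativity does not follow from this representation without additional input.

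The paper's route for (3) avoids integration by parts entirely. One substitutes $x=s^2-r^2$ (so $s=\sqrt{x+r^2}$); then the Gaussian factor $e^{-(s^2-r^2)/(4t)}=e^{-x/(4t)}$ becomes $r$-independent, and the Jacobian $\sinh s\,ds=\tfrac{1}{2f_1(s)}\,dx$ replaces $\alpha_{2m+1}$ by $\alpha_{2m+1}/f_1$ in the integrand. The crucial gain is that
\[
0\;\le\;-\bigl(\log(\alpha_{2m+1}/f_1)\bigr)_s\;\le\;m-1,
\]
which follows from the same convex-combination argument as in part~(2) but using the refined bound $0\le-(\log(f_k/f_1))_r\le k-1$ coming from Proposition~\ref{prop-f}(4), combined with Proposition~\ref{prop-f}(5) (each monomial in $P_{m,i}$ has ordinary degree $m-i\ge1$, so dividing by one $f_1$ leaves nonnegative exponents). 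The only remaining $r$-dependence is in $(\cosh s-\cosh r)^{-1/2}$ with $s=\sqrt{x+r^2}$, and the elementary Lemma~\ref{lem-Z} gives $0\le\partial_r\log(\cosh\sqrt{x+r^2}-\cosh r)\le1$. Hence the logarithmic $r$-derivative of the integrand lies in $[-(m-1)-\tfrac12,\,0]$ pointwise, and integrating yields $0\le-(\log\alpha_{2m})_r\le m-\tfrac12$. The extra $\tfrac12$ is therefore not a global IBP cancellation but the sum $(m-1)+\tfrac12$ of two pointwise bounds.
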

\begin{proof}
(1) By the expression \eqref{eq-alpha-odd},
    \begin{equation}
    0\leq(\log \alpha_{2m+1})_t=\frac{\sum_{i=1}^{m-1}it^{i-1}P_{m,i}(f_1,f_2,\cdots,f_m)}{\sum_{i=0}^{m-1}t^iP_{m,i}(f_1,f_2,\cdots,f_m)}\leq \frac{m-1}{t}.
    \end{equation}
\noindent (2) Since $f_m$ is decreasing for $m=1,2,\cdots$, by \eqref{eq-alpha-odd}, $\alpha_{2m+1}$ is decreasing with respect to $r$. So $-(\log\alpha_{2m+1})_r\geq 0$. On the other hand, by (6) of Proposition \ref{prop-f}, suppose that
    \begin{equation}
    P_{m,i}(f_1,f_2,\cdots,f_m)=\sum_{\tiny\begin{array}{l}j_1,j_2,\cdots,j_m\geq0\\j_1+2j_2+\cdots+mj_m=m\end{array}}a_{m,j_1,j_2,\cdots,j_m}f_1^{j_1}f_2^{j_2}\cdots f_m^{j_m}
    \end{equation}
    with $a_{m,j_1,j_2,\cdots,j_m}\geq 0$. Then
    \begin{equation}
    \begin{split}
    &(\alpha_{2m+1})_r+m\alpha_{2m+1}\\
    =&\sum_{i=0}^{m-1} t^i[(P_{m,i}(f_1,f_2,\cdots,f_m))_r+mP_{m,i}(f_1,f_2,\cdots,f_m)]\\
    =&\sum_{i=0}^{m-1}t^i\sum_{\tiny\begin{array}{l}j_1,j_2,\cdots,j_m\geq0\\j_1+2j_2+\cdots+mj_m=m\end{array}}a_{m,j_1,j_2,\cdots,j_m}[(f_1^{j_1}f_2^{j_2}\cdots f_m^{j_m})_r+mf_1^{j_1}f_2^{j_2}\cdots f_m^{j_m}]\\
    =&\sum_{i=0}^{m-1}t^i\sum_{\tiny\begin{array}{l}j_1,j_2,\cdots,j_m\geq0\\j_1+2j_2+\cdots+mj_m=m\end{array}}a_{m,j_1,j_2,\cdots,j_m}f_1^{j_1}f_2^{j_2}\cdots f_m^{j_m}\sum_{k=1}^mj_k\left(k+(\log f_k)_r\right)\\
    \geq&0
    \end{split}
    \end{equation}
    by (3) of Proposition \ref{prop-f}. So, $-(\log\alpha_{2m+1})_r\leq m$. Moreover, note that
    \begin{equation}
    \begin{split}
   -\frac{(f_1^{j_1}f_2^{j_2}\cdots f_m^{j_m})_r}{f_1^{j_1}f_2^{j_2}\cdots f_{m}^{j_m}}=&j_1q_1\sinh r+j_2q_2\sinh r+\cdots+ j_mq_m\sinh r\\
   \to& j_1+2j_2+\cdots +mj_m
   \end{split}
    \end{equation}
    as $r\to \infty$, by (2) of Proposition \ref{prop-f}. From this, \eqref{eq-alpha-odd} and (6) of Proposition \ref{prop-f}, we get the conclusion.\\
(3) By \eqref{eq-re-s1},
\begin{equation}\label{eq-alpha-even}
\begin{split}
\alpha_{2m}(t,r)=&\frac{\sqrt 2 }{(4\pi t)^\frac12}\int_{r}^\infty\frac{\alpha_{2m+1}(t,s)e^{-\frac{s^2-r^2}{4t}}\sinh s }{\sqrt{\cosh s-\cosh r}}ds\\
=&\frac{1}{(8\pi t)^\frac12}\int_{0}^\infty\frac{\alpha_{2m+1}(t,s)e^{-\frac{x}{4t}}/f_1(s) }{\sqrt{\cosh s-\cosh r}}dx\\
\end{split}
\end{equation}
where $s=\sqrt{x+r^2}$.  By that $q_m$ is decreasing and the expression \eqref{eq-alpha-odd}, $\alpha_{2m+1}(t,s)/f_1(s)$ is decreasing on $r$. Combining this with Lemma \ref{lem-Z}, we know that $\alpha_{2m}(t,r)$ is decreasing with respect to $r$.

Furthermore, by (4) of Proposition \ref{prop-f},
\begin{equation}
0\leq -\left(\log \frac{f_{m}}{f_1}\right)_r\leq m-1
\end{equation}
for $m=1,2,\cdots$. From this and using the same argument as in the proof (2), we have
\begin{equation}
-[\log(\alpha_{2m+1}(t,s)/f_1(s))]_s\leq m-1.
\end{equation}
Therefore,
\begin{equation}
-[\log(\alpha_{2m+1}(t,s)/f_1(s))]_r=-[\log(\alpha_{2m+1}(t,s)/f_1(s))]_s\frac{r}{s}\leq m-1.
\end{equation}
Then, by Lemma \ref{lem-Z}, we know that
\begin{equation}
 -[\log(\alpha_{2m+1}(t,s)(\cosh s-\cosh r)^{-\frac{1}2}/f_1(s))]_r \leq m-\frac12.
\end{equation}
From this and  the expression \eqref{eq-alpha-even} of $\alpha_{2m}$, we get the conclusion.\\

\end{proof}
In the proof of Proposition \ref{prop-alpha}, we need the following lemma.
\begin{lem}\label{lem-Z} For any positive constant $a$
$$0\leq\left[\log\left(\cosh\sqrt{a^2+r^2}-\cosh r\right)\right]_r\leq 1.$$
\end{lem}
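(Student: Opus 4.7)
The plan is to set $s = \sqrt{a^2+r^2}$, so that $s \geq r$, $ds/dr = r/s$, and $F(r) := \cosh s - \cosh r > 0$ for all $r \geq 0$ when $a > 0$. Since $(\log F)_r = F'/F$, the assertion $0 \leq (\log F)_r \leq 1$ is equivalent to the two pointwise inequalities $F'(r) \geq 0$ and $F'(r) \leq F(r)$, where a direct computation gives $F'(r) = (r/s)\sinh s - \sinh r$.

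For the lower bound I would invoke the elementary fact that $x \mapsto \sinh(x)/x$ is increasing on $[0,\infty)$ (immediate from its Taylor series). Since $s \geq r$, this yields $\sinh s / s \geq \sinh r / r$, hence $(r/s)\sinh s \geq \sinh r$, which is exactly $F'(r) \geq 0$.

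For the upper bound, using $\cosh r - \sinh r = e^{-r}$ the inequality $F'(r) \leq F(r)$ becomes $\cosh s - (r/s)\sinh s \geq e^{-r}$. The key trick is the decomposition $\cosh s = e^{-s} + \sinh s$, which rewrites the left-hand side as $e^{-s} + (s-r)\sinh s/s$ and reduces the problem to
\begin{equation*}
\frac{(s-r)\sinh s}{s} \geq e^{-r} - e^{-s} = e^{-r}\bigl(1 - e^{-(s-r)}\bigr).
\end{equation*}
Both $\sinh s / s \geq 1$ (again by Taylor) and $1 - e^{-x} \leq x$ for $x \geq 0$ are elementary, so chaining them gives $(s-r)\sinh s / s \geq s-r \geq e^{-r}(s-r) \geq e^{-r}(1 - e^{-(s-r)})$, which closes the argument.

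The only real obstacle is spotting the decomposition $\cosh s = e^{-s} + \sinh s$; it converts what looks like a somewhat delicate exponential comparison into two very soft estimates, namely the monotonicity of $\sinh x/x$ and the convexity bound $1 - e^{-x} \leq x$. Once this rewriting is in hand, no further difficulty arises.
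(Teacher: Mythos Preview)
Your argument is correct. The lower bound is handled identically to the paper: both of you observe that
\[
\left(\cosh\sqrt{a^2+r^2}-\cosh r\right)_r=r\left(\frac{\sinh\sqrt{a^2+r^2}}{\sqrt{a^2+r^2}}-\frac{\sinh r}{r}\right)\geq 0
\]
via the monotonicity of $\sinh x/x$.

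For the upper bound the two proofs diverge. The paper sets $\rho=\sqrt{a^2+r^2}$, forms the two-variable function
\[
F(\rho,r)=\rho(\cosh\rho-\cosh r)-(r\sinh\rho-\rho\sinh r),
\]
and shows $F\geq 0$ on $\{\rho\geq r\geq 0\}$ by computing $\partial_\rho^2 F>0$, then $\partial_\rho F(\rho,r)\geq\partial_\rho F(r,r)$, and finally checking that $\partial_\rho F(r,r)$ is nonnegative via yet another derivative in $r$. Your route avoids this calculus entirely: the identity $\cosh s=e^{-s}+\sinh s$ reduces the target inequality to $(s-r)\sinh s/s\geq e^{-r}-e^{-s}$, after which the soft bounds $\sinh s/s\geq 1$, $e^{-r}\leq 1$, and $1-e^{-x}\leq x$ finish the job in one line. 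Your argument is shorter and requires only elementary estimates rather than a two-variable monotonicity analysis; the paper's approach, while heavier, does not depend on spotting the exponential decomposition and is perhaps more systematic to discover.
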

\begin{proof}
  Note that
  \begin{equation}
  \left(\cosh\sqrt{a^2+r^2}-\cosh r\right)_r=r\left(\frac{\sinh \sqrt{a^2+r^2}}{\sqrt{a^2+r^2}}-\frac{\sinh r}{r}\right)\geq 0
  \end{equation}
  by that $\frac{\sinh x}{x}$ is an increasing function. So,
  \begin{equation}
  \left[\log\left(\cosh\sqrt{a^2+r^2}-\cosh r\right)\right]_r\geq 0.
  \end{equation}
Moreover, to show that
\begin{equation}
\left[\log\left(\cosh\sqrt{a^2+r^2}-\cosh r\right)\right]_r\leq 1
\end{equation}
it is equivalent to show that
\begin{equation}
F(\rho,r):=\rho(\cosh \rho-\cosh r)-(r\sinh \rho-\rho\sinh r)\geq 0
\end{equation}
where $\rho=\sqrt{a^2+r^2}\geq r$. Note that
\begin{equation}
\p_\rho F=\rho\sinh \rho+\cosh \rho-r\cosh \rho -\cosh r+\sinh r
\end{equation}
and
\begin{equation}
\p_\rho^2F=2\sinh\rho+\rho\cosh\rho-r\sinh\rho>0
\end{equation}
for $\rho\geq r$. Hence
\begin{equation}
\p_\rho F(\rho,r)\geq\p_\rho F(r,r)=(1+r)\sinh r-r\cosh r.
\end{equation}
Moreover
\begin{equation}
[\p_\rho F(r,r)]_r=r\cosh r+\sinh r-r\sinh r\geq0.
\end{equation}
So,
\begin{equation}
\p_\rho F(\rho,r)\geq\p_\rho F(r,r)\geq\p_\rho F(0,0)=0.
\end{equation}
Hence
\begin{equation}
F(\rho,r)\geq F(r,r)=0.
\end{equation}
This completes the proof of the lemma.
\end{proof}
We are now ready to prove our main results.
\begin{proof}[Proof of Theorem \ref{thm-odd}]By (5) of Proposition \ref{prop-ele-p-S}, we only need to prove the theorem for hyperbolic spaces. By \eqref{eq-K}
\begin{equation}
\log K_n=-\frac n2\log(4\pi t)-\frac{(n-1)^2}{4}t-\frac{r^2}{4t}+\log\alpha_n.
\end{equation}
Then, by (1) and (2) of Proposition \ref{prop-alpha},
\begin{equation}\label{eq-LY-odd}
\begin{split}
&\beta\|\nabla \log K_n\|^2-(\log K_n)_t\\
=&\beta\left(-\frac{r}{2t}+(\log\alpha_n)_r\right)^2+\frac{n}{2t}+\frac{(n-1)^2}{4}-\frac{r^2}{4t^2}-(\log\alpha_n)_t\\
=&-(1-\beta)\left(\frac{r}{2t}+\frac{\beta}{1-\beta}(\log \alpha_n)_r\right)^2+\frac{\beta}{1-\beta}(-\log \alpha_n)_r^2+\frac{(n-1)^2}{4}+\frac{n}{2t}-(\log\alpha_n)_t\\
\leq&\frac{n}{2t}+\frac{(n-1)^2}{4(1-\beta)}.
\end{split}
\end{equation}

\end{proof}
\begin{rem}
By the asymptotic behaviors in (1) and (2) of Proposition \ref{prop-alpha}, we know that \eqref{eq-LY-odd} is asymptotically sharp as $t$ and $r$ tending to infinity for hyperbolic spaces.
\end{rem}
\begin{proof}[Proof of Theorem \ref{thm-odd}]By (5) of Proposition \ref{prop-ele-p-S}, we only need to prove the theorem for hyperbolic spaces. By \eqref{eq-K}
\begin{equation}
\log K_n=-\frac {n+1}2\log(4\pi t)-\frac{(n-1)^2}{4}t-\frac{r^2}{4t}+\log((4\pi t)^\frac12\alpha_n).
\end{equation}
By \eqref{eq-alpha-even} and (1) of Proposition \ref{prop-alpha},
\begin{equation}
\left[\log((4\pi t)^\frac12\alpha_n)\right]_t\geq 0.
\end{equation}
Then, by (3) of Proposition \ref{prop-alpha},
\begin{equation}\label{eq-LY-even}
\begin{split}
&\beta\|\nabla \log K_n\|^2-(\log K_n)_t\\
=&\beta\left(-\frac{r}{2t}+(\log((4\pi t)^\frac12\alpha_n))_r\right)^2+\frac{n+1}{2t}+\frac{(n-1)^2}{4}-\frac{r^2}{4t^2}-(\log(4\pi t)^\frac12\alpha_n)_t\\
\leq &-(1-\beta)\left(\frac{r}{2t}+\frac{\beta}{1-\beta}(\log\alpha_n)_r\right)^2+\frac{\beta}{1-\beta}(-\log \alpha_n)_r^2+\frac{(n-1)^2}{4}+\frac{n+1}{2t}\\
\leq&\frac{n+1}{2t}+\frac{(n-1)^2}{4(1-\beta)}.
\end{split}
\end{equation}
\end{proof}
\begin{rem}
The difference of the odd dimensional case and even dimensional case in our argument is that $\alpha_n$ is not increasing with respect to $t$ when $n$ is even. In fact, one can see that $\alpha_2$ is decreasing with respect to $t$ (see Proposition \ref{prop-dim-2} in the Appendix.).
\end{rem}
By applying Proposition \ref{prop-harnack-g}, we are able to prove Theorem \ref{thm-harnack}.

\begin{proof}[Proof of Theorem \ref{thm-harnack}]
We only need to prove the odd dimensional case. The proof of the even dimensional case is similar.

By Proposition \ref{prop-harnack-g} and Theorem \ref{thm-odd}, we have
\begin{equation}
u(x_1,t_1)\leq \left(\frac{t_2}{t_1}\right)^\frac {n}2 \exp\left(\frac{r^2(x_1,x_2)}{4\beta(t_2-t_1)}+\frac{(n-1)^2(t_2-t_1)}{4(1-\beta)}\right)u(x_2,t_2)
\end{equation}
for any constant $\beta\in (0,1)$. Let $\beta=\frac{1}{1+\frac{(n-1)(t_2-t_1)}{r(x_1,x_2)}}$ which is the minimum point of $\frac{r^2(x_1,x_2)}{4\beta(t_2-t_1)}+\frac{(n-1)^2(t_2-t_1)}{4(1-\beta)}$. We get the conclusion.
\end{proof}

\section{Appendix}
In this appendix, we give the details in the  comparisons of  Li-Xu's estimate \eqref{eq-LX} with the Li-Yau-Davies estimate \eqref{eq-LYD} and with Hamilton's estimate \eqref{eq-Ha} respectively which are not that obvious comparing to the other comparisons of estimates in Section 1. Moreover, we will show that $\alpha_2$ is deceasing with respect to $t$.

We first compare Li-Xu's estimate \eqref{eq-LX} and Hamilton's estimate \eqref{eq-Ha}.
\begin{prop}\label{prop-LXHa}
For any $x\geq 0$, $$\frac{ x(\coth x+1)}{1+\frac{\sinh x\cosh x-x}{\sinh^2x}}\leq e^{2x}$$ and $$1+\frac{\sinh x\cosh x-x}{\sinh^2x}\leq e^{2x}.$$
As a consequence, Li-Xu's estimate \eqref{eq-LX} is better than Hamilton's estimate \eqref{eq-Ha} by letting $x=kt$.
\end{prop}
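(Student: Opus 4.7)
The plan is to reduce each of the two inequalities to a polynomial--exponential statement in $e^{2x}$ and $x$ by using the identities $\sinh x + \cosh x = e^x$ and $4\sinh^2 x = (e^{2x}-1)^2/e^{2x}$, and then verify each reduced form by repeated differentiation starting from $x = 0$.

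For the setup, I would first simplify
\begin{equation*}
1 + \frac{\sinh x \cosh x - x}{\sinh^2 x} = \frac{\sinh x(\sinh x + \cosh x) - x}{\sinh^2 x} = \frac{\sinh x \cdot e^x - x}{\sinh^2 x} = \frac{e^{2x}-1-2x}{2\sinh^2 x},
\end{equation*}
and note that $x(\coth x + 1) = xe^x/\sinh x = 2xe^{2x}/(e^{2x}-1)$. Substituting these into the two stated inequalities and clearing denominators (using $e^{2x}-1-2x > 0$ for $x > 0$), the second inequality becomes
\begin{equation*}
(e^{2x}-1)(e^{2x}-3) + 4x \geq 0,
\end{equation*}
while the first becomes
\begin{equation*}
e^{4x} - (1+3x)e^{2x} + x \geq 0.
\end{equation*}

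For the second reduced form, I set $\phi(x) := (e^{2x}-1)(e^{2x}-3) + 4x$; since $\phi(0) = 0$ and a direct computation gives $\phi'(x) = 4(e^{2x}-1)^2 \geq 0$, the conclusion follows immediately. For the first reduced form, I set $\psi(x) := e^{4x} - (1+3x)e^{2x} + x$ and verify that $\psi(0) = \psi'(0) = \psi''(0) = 0$, while
\begin{equation*}
\psi'''(x) = 4e^{2x}\bigl(16e^{2x} - 11 - 6x\bigr).
\end{equation*}
The bracket equals $5$ at $x = 0$ and has derivative $32e^{2x} - 6 > 0$ on $[0,\infty)$, so it stays bounded below by $5$; hence $\psi''' > 0$ on $[0,\infty)$, and integrating three times from $x = 0$ yields $\psi \geq 0$.

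The only mild obstacle is keeping the algebra of the reduction step tidy; once both inequalities are brought to the forms above, each is proved by a Taylor-type argument anchored at $x = 0$. The consequence is immediate: setting $x = kt$, the second inequality is exactly $\beta_H(t) \leq \beta_{LX}(t)$ and the first is exactly $\gamma_{LX}(t) \leq \gamma_H(t)$, so by the comparison criterion of Section 1, Li-Xu's estimate \eqref{eq-LX} is better than Hamilton's estimate \eqref{eq-Ha}.
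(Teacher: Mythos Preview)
Your proof is correct. For the second inequality, your argument and the paper's are essentially the same computation in different coordinates: your $\phi(x)=(e^{2x}-1)(e^{2x}-3)+4x$ equals $4$ times the paper's $f(x)=(e^{2x}-1)\sinh^2 x-\sinh x\cosh x+x$, and both are shown nonnegative by one differentiation from $x=0$.

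The genuine difference is in the first inequality. The paper avoids your three-derivative computation by a short algebraic trick: setting $\beta=\bigl(1+\tfrac{\sinh x\cosh x-x}{\sinh^2 x}\bigr)^{-1}$, one notices that $\coth x+1=\tfrac{1}{\beta}+\tfrac{x}{\sinh^2 x}$, so the left side equals
\[
x(\coth x+1)\beta \;=\; x+\frac{x^2}{\sinh^2 x}\,\beta \;\leq\; x+1 \;\leq\; e^{2x},
\]
using only $\beta\leq 1$ and $x/\sinh x\leq 1$. This is a two-line argument and even yields the sharper intermediate bound $x+1$, which the paper reuses later in Proposition~\ref{prop-LXLY}. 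Your route via $\psi(x)=e^{4x}-(1+3x)e^{2x}+x$ and $\psi'''>0$ is more mechanical but entirely valid; it has the minor advantage of being self-contained in $e^{2x}$ and not needing the auxiliary observation about $\coth x+1$.
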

\begin{proof}
Let $\beta=\frac{1}{1+\frac{\sinh x\cosh x-x}{\sinh^2x}}$. Then, it is clear that \begin{equation}
\begin{split}
\frac{ x(\coth x+1)}{1+\frac{\sinh x\cosh x-x}{\sinh^2x}}=&x\left(\frac{1}{\beta}+\frac{x}{\sinh^2 x}\right)\beta\\
=&x+\frac{x^2}{\sinh ^2x}\beta\\
\leq&1+x\\
\leq&e^{2x}.
\end{split}
\end{equation}
For the other inequality, it is equivalent to
\begin{equation}
f(x)=(e^{2x}-1)\sinh^2 x-\cosh x\sinh x+x\geq 0.
\end{equation}
Note that
\begin{equation}
f'(x)=2(e^{2x}-1)\sinh^2x+2(e^{2x}-1)\sinh x\cosh x\geq 0.
\end{equation}
So
\begin{equation}
f(x)\geq f(0)=0.
\end{equation}
\end{proof}
Next, we come to the comparison of Li-Xu's estimate \eqref{eq-LX} and the Li-Yau-Davies estimate \eqref{eq-LYD}.
\begin{prop}\label{prop-LXLY}Let $\beta=\frac{1}{1+\frac{\sinh x\cosh x-x}{\sinh^2x}}$. Then,
\begin{enumerate}
\item $x(\coth x+1)\beta< \frac{1}{\beta}+\frac{x}{2(1-\beta)}$ for any $x>0$. As a consequence, $\gamma_{LYD}(\beta_{LX}(t), t)>\gamma_{LX}(t)$ for any $t>0$ by setting $x=kt$.
\item The graphs of the functions $(1+\sqrt{\frac{x}2})^2$ and $x(\coth(x)+1)\beta$ intersect at only one point $x_{LX}\geq 8$. As a consequence, the graphs of the functions $\gamma_m(t)$ and $\gamma_{LX}(t)$ intersect at only one point $t_{LX}=\frac{x_{LX}}{k}$.
\item When $x\geq x_{LX}$, $\beta>\frac{1}{1+\sqrt{\frac x2}}.$ As a consequence, $\beta_m(t)<\beta_{LX}(t) $ for $t\geq t_{LX}$.
\end{enumerate}
\end{prop}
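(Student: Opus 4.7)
The plan is to prove the three parts separately, addressing part (1) via an algebraic rearrangement, part (2) via careful derivative analysis, and part (3) by a short elementary estimate.

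For part (1), I would first recast the inequality using the substitutions $u := \sinh x \cosh x - x$, $v := \sinh^2 x$, $w := v - u = x - \sinh x \cdot e^{-x}$, and $D := u+v = \sinh x \cdot e^x - x$, all strictly positive for $x > 0$ (the positivity of $w$ being the tangent-line bound $e^{-2x} > 1 - 2x$). Since $\beta = v/D$, $1-\beta = u/D$, and the identity $x(\coth x+1)\beta = x + x^2/D$ from the proof of Proposition \ref{prop-LXHa} is available, clearing denominators reduces the claim to
\[2u(D^2 - x^2 v) + xwvD > 0.\]
The second summand is positive by inspection; the first is positive because $D > x\sinh x = x\sqrt{v}$, which I would verify by writing $D - x\sinh x = \sinh x(e^x - x) - x$ and using $\sinh x > x$, $e^x - x > 1$ for $x > 0$. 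The consequence $\gamma_{LYD}(\beta_{LX}(t),t) > \gamma_{LX}(t)$ then follows by scaling.

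For part (2), write $D(x) := (e^{2x} - 2x - 1)/2$ and $H(x) := 1 + \sqrt{2x} - x/2$, so that the identity above gives
\[h(x) := (1+\sqrt{x/2})^2 - x(\coth x+1)\beta = H(x) - \frac{x^2}{D(x)}.\]
The proof proceeds in three steps: (i) show $x^2/D(x)$ strictly decreases from $1$ to $0$ on $(0,\infty)$, which follows because its derivative numerator $x[(1-x)e^{2x} - (1+x)]$ has a bracket that vanishes at $0$ together with its first derivative and has negative second derivative $-4xe^{2x}$; (ii) observe that $H(x) \geq 1 \iff x \leq 8$, so on $(0,8]$ we have $h(x) \geq 1 - x^2/D(x) > 0$; (iii) on $[8,\infty)$, bound $H'(x) \leq -1/4$ and show $|(x^2/D(x))'|$ is exponentially small (less than $1/8$ for $x \geq 8$, via $D(x) \geq 0.499\, e^{2x}$), giving $h'(x) < 0$. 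Together with $h(\infty) = -\infty$, these yield a unique zero $x_{LX} > 8$, and multiplying by $n/(2t)$ transfers the statement to $\gamma_m$ and $\gamma_{LX}$.

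For part (3), the positivity of $w$ from part (1) implies $v > u$, i.e., $y = u/v < 1$, so $\beta = 1/(1+y) > 1/2$ for all $x > 0$. On the other hand, for $x \geq 2$ we have $\sqrt{x/2} \geq 1$ and thus $1/(1+\sqrt{x/2}) \leq 1/2$. Since $x_{LX} \geq 8 > 2$ by part (2), the strict inequality $\beta > 1/(1+\sqrt{x/2})$ holds on $[x_{LX}, \infty)$, and the consequence $\beta_m(t) < \beta_{LX}(t)$ for $t \geq t_{LX}$ follows by setting $x = kt$.

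The main obstacle is locating the clean algebraic reformulation in part (1): once the rearrangement $2u(D^2 - x^2 v) + xwvD > 0$ is in hand, the verification becomes elementary. In part (2), the technical burden is the explicit exponential-decay bound on $|(x^2/D)'|$, which is straightforward but tedious.
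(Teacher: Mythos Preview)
Your proof is correct. For part~(1) the paper takes a much shorter route: from $\tfrac12<\beta<1$ (equivalent to your $w>0$ and $u>0$) and the identity $x(\coth x+1)\beta = x + \tfrac{x^2}{\sinh^2 x}\beta$, one gets directly $x(\coth x+1)\beta \leq x+1 < \tfrac{1}{\beta} + \tfrac{x}{2(1-\beta)}$, so your denominator-clearing is unnecessary once you have $w>0$. Part~(2) is essentially the same argument in both versions: on $(0,8]$ the paper's sandwich $x(\coth x+1)\beta \le x+1 \le (1+\sqrt{x/2})^2$ is exactly your $x^2/D < 1 \le H$, and on $[8,\infty)$ the paper's derivative comparison $[(1+\sqrt{x/2})^2]_x \le \tfrac34 < [x(\coth x+1)\beta]_x$ is your bound $h'<0$ after shifting both sides by~$1$. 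For part~(3) your argument is cleaner than the paper's: you use only $\beta>\tfrac12$ together with $\sqrt{x/2}\ge 1$ for $x\ge 2$, whereas the paper reduces to $\sqrt{x/2}\,\sinh^2 x - \sinh x\cosh x + x > 0$ and invokes $2\sinh x \ge \cosh x$ for $x\ge 8$; your version in fact yields the inequality for all $x\ge 2$, not merely $x\ge x_{LX}$.
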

\begin{proof}
\begin{enumerate}
\item It is not hard to see that $\frac 12\leq\beta\leq 1$. Then, the same as in the proof of the last proposition, we have
    \begin{equation}
    x(\coth x+1)\beta\leq 1+x< \frac{1}{\beta}+\frac{x}{2(1-\beta)}.
    \end{equation}
\item The same as in the proof of the last proposition, we have
\begin{equation}
x\leq x(\coth x+1)\beta\leq x+1.
\end{equation}
The graphs of $(1+\sqrt{\frac x 2})^2$ and $x+1$ intersect at $x=8$ while the graphs of $(1+\sqrt{\frac x 2})^2$  and $x$ intersect at $x=6+4\sqrt 2$. Hence, the graphs of $(1+\sqrt{\frac x 2})^2$ and $x(\coth x+1)\beta$ must intersect at some  $x\in [8,6+4\sqrt 2]$. Moreover, note that
\begin{equation}
\left[\left(1+\sqrt{\frac x2}\right)^2\right]_x\leq \frac{3}{4}
\end{equation}
when $x\geq 8$ while
\begin{equation}
\begin{split}
&[x(\coth x+1)\beta]_x\\
=&\left[x+\frac{x^2}{\sinh^2 x}\beta\right]_{x}\\
=&1+\left(\frac{2x}{\sinh^2x}-\frac{2x^2\cosh x}{\sinh^3x}\right)\beta+\frac{x^2}{\sinh^2x}\left(\frac{2}{\sinh^2x}-\frac{2x\cosh x}{\sinh^3x}\right)\beta^2\\
\geq&1-\frac{(2x^2+2x^3)\cosh x}{\sinh^3x}\\
\geq&1-\frac{(4x^2+4x^3)}{\sinh^2x}\\
\geq&1-\frac{4x+4x^2}{\sinh x}\\
>&\frac 34
\end{split}
\end{equation}
when $x\geq 8$. Here we have used that $0<\beta\leq 1$, $\frac{x}{\sinh x}\leq 1$ and $\cosh x\leq 2\sinh x$ for $x\geq 8$. This gives us the conclusion.
\item When $x\geq x_{LX}\geq 8$,
\begin{equation}
\sqrt\frac x2\sinh^2x-\cosh x\sinh x+x\geq 2\sinh^2x-\cosh x\sinh x+x>0
\end{equation}
since $2\sinh x\geq \cosh x$ when $x\geq 8$. This gives us the conclusion.
\end{enumerate}
\end{proof}
Finally, we come to show that $\alpha_2$ is decreasing with respect to $t$.
\begin{prop}\label{prop-dim-2}
For the hyperbolic plane $\mathbb H^2$, $(\log \alpha_2)_t<0$ and hence
\begin{equation}
-(\log K_2)_t(t,0)>\frac{1}{t}+\frac{1}{4}.
\end{equation}
\end{prop}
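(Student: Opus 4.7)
The plan is to prove the stronger pointwise statement $(\log\alpha_2)_t(t,r) < 0$ for every $t > 0$ and $r \geq 0$; the stated consequence $-(\log K_2)_t(t,0) > 1/t + 1/4$ then follows from \eqref{eq-K} with $n = 2$, since $\log K_2 = -\log(4\pi t) - t/4 - r^2/(4t) + \log\alpha_2$ gives $(\log K_2)_t(t,0) = -1/t - 1/4 + (\log\alpha_2)_t(t,0)$.

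First I would transform the integral representation. Applying the recurrence \eqref{eq-re-s1} with $n=2$ and $\alpha_3(t,s)=s/\sinh s$, and then substituting $s=\sqrt{r^2+4tv}$ (so $s\,ds=2t\,dv$), one obtains
\[
\alpha_2(t,r)=\sqrt{\tfrac{2t}{\pi}}\,F(t,r),\qquad F(t,r):=\int_0^\infty \frac{e^{-v}}{\sqrt{\cosh\rho-\cosh r}}\,dv,
\]
where $\rho=\rho(v,t):=\sqrt{r^2+4tv}$. Then $(\log\alpha_2)_t=\tfrac{1}{2t}+F_t/F$, so it suffices to show $F+2tF_t<0$.

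Next I would differentiate $F$ under the integral sign, using $\rho_t=2v/\rho$ and $2tv=(\rho^2-r^2)/2$, to arrive at the key identity
\[
F+2tF_t=\int_0^\infty \frac{e^{-v}\,\Phi(\rho,r)}{2\rho(\cosh\rho-\cosh r)^{3/2}}\,dv,\qquad \Phi(\rho,r):=2\rho(\cosh\rho-\cosh r)-(\rho^2-r^2)\sinh\rho.
\]
The whole argument then hinges on showing $\Phi(\rho,r)<0$ for $\rho>r\geq 0$. Observing that $\Phi(\rho,\rho)=0$, one computes
\[
\partial_r\Phi(\rho,r)=2\bigl(r\sinh\rho-\rho\sinh r\bigr)>0\quad\text{for }0<r<\rho,
\]
the strict positivity following from the fact that $x\mapsto\sinh x/x$ is strictly increasing on $(0,\infty)$ (all Maclaurin coefficients are positive). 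Hence $\Phi(\rho,r)<\Phi(\rho,\rho)=0$ for $0\leq r<\rho$, and since $\rho>r$ for every $v>0$, the integrand above is strictly negative, yielding $F+2tF_t<0$ and therefore $(\log\alpha_2)_t<0$.

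The main obstacle---indeed essentially the only nontrivial point---is identifying the algebraic combination $\Phi(\rho,r)$ whose sign is controlled by a one-line monotonicity in $r$ pinned down by its vanishing on the diagonal $\rho=r$. Once the substitution $s=\sqrt{r^2+4tv}$ is in hand, the remaining steps are routine: differentiation under the integral sign, simplification via $2tv=(\rho^2-r^2)/2$, and the elementary monotonicity of $\sinh x/x$.
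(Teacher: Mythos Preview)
Your proof is correct. Both you and the paper use the same substitution (your $v$ is the paper's $x/4$), but the arguments diverge after that. The paper keeps the factor of $t$ inside the radical, writing
\[
\alpha_2=\frac{1}{2\sqrt{2\pi}}\int_0^\infty \frac{e^{-x/4}}{\sqrt{\bigl(\cosh\sqrt{tx+r^2}-\cosh r\bigr)/t}}\,dx,
\]
and then expands $\bigl(\cosh\sqrt{tx+r^2}-\cosh r\bigr)/t$ as a power series in $t$ with strictly positive coefficients; monotonicity of the integrand in $t$ follows immediately, with no differentiation under the integral sign required. You instead pull the $\sqrt t$ outside, differentiate, and reduce to the pointwise inequality $\Phi(\rho,r)<0$, which you prove by monotonicity in $r$ pinned at the diagonal. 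The two key inequalities are in fact equivalent: $\Phi(\rho,r)<0$ is precisely the condition that the paper's quantity $\bigl(\cosh\rho-\cosh r\bigr)/t$ has positive $t$-derivative. The paper's series argument is slightly slicker in that it sidesteps any appeal to differentiation under the integral (which in your approach is valid but deserves a word, since near $v=0$ the integrand of $F_t$ behaves like $v^{-1/2}$); on the other hand, your calculus-in-$r$ argument is more self-contained and makes the geometric source of the sign transparent.
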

\begin{proof}By \eqref{eq-alpha-even},
\begin{equation}
\begin{split}
\alpha_2=&\frac{\sqrt 2e^{\frac{r^2}{4t}}}{\sqrt{4\pi t}}\int_{r}^\infty\frac{se^{-\frac{s^2}{4t}}}{\sqrt{\cosh s-\cosh r}}ds\\
=&\frac{1}{2\sqrt{2\pi }}\int_{0}^\infty\frac{e^{-\frac x4}}{\sqrt{\frac{\cosh (\sqrt{tx+r^2})-\cosh r}{t}}}dx\\
\end{split}
\end{equation}
where $x=\frac{s^2-r^2}{t}$. Note that
\begin{equation}
\begin{split}
&\frac{\cosh (\sqrt{tx+r^2})-\cosh r}{t}\\
=&\frac{1}{t}\sum_{i=1}^\infty\frac{(tx+r^2)^i-r^{2i}}{(2i)!}\\
=&\frac{x}{2}+a_1(x,r) t+a_2(x,r) t^2+\cdots
\end{split}
\end{equation}
with $a_1(x,r),a_2(x,r),\cdots$  all positive numbers when $x>0$, hence it is strictly increasing. So, $(\alpha_2)_t<0$. Moreover, by \eqref{eq-K}, we have
\begin{equation}
-(\log K_2)_t(t,0)=\frac{1}{t}+\frac{1}{4}-(\log\alpha_2)_t(t,0)>\frac{1}{t}+\frac{1}{4}.
\end{equation}
\end{proof}

\end{document}